\documentclass[11pt]{article}
\usepackage[margin=1in]{geometry}
\usepackage{amsmath,amssymb,amsthm}
\usepackage{booktabs}
\usepackage[dvipsnames]{xcolor}
\usepackage[colorlinks=true,linkcolor=MidnightBlue,citecolor=MidnightBlue,
            urlcolor=MidnightBlue]{hyperref}
\usepackage{url}

\newtheorem{theorem}{Theorem}
\newtheorem{proposition}{Proposition}
\newtheorem{lemma}{Lemma}
\newtheorem{corollary}{Corollary}
\newtheorem{conjecture}{Conjecture}
\theoremstyle{definition}
\newtheorem{definition}{Definition}

\theoremstyle{remark}
\newtheorem{remark}{Remark}

\newcommand{\edom}{\gamma_{e}}
\newcommand{\alphapm}{\alpha_{\mathrm{pm}}}

\newcommand{\ind}{i}

\newcommand{\Pot}{\Psi}

\title{Domination versus edge domination in regular graphs\\ of degree at least seven}
\author{Chakshu Gupta\\[2pt]
  {\small College of Computing, Georgia Institute of Technology}\\
  {\small \texttt{cgupta65@gatech.edu}}}
\date{}

\begin{document}
\maketitle

\begin{abstract}
Baste et al.\ (2020) conjectured that every regular graph of positive degree has
domination number at most its edge domination number, the least size of a
maximal matching. Combining published bounds settles the inequality for every
degree at least nine. A reduction proves the inequality whenever one endpoint of
each edge of a minimum maximal matching can be chosen to form a dominating set,
and the Lov\'asz Local Lemma shows such a choice exists for every degree at least
seven, newly closing degrees seven and eight and leaving degrees three through
six open. The reduction settles
each open degree up to a bounded number of vertices, forty-eight for cubic
graphs. At fifty vertices, however, the reduction meets
an explicit cubic graph it cannot settle, though the inequality holds there too.
The inequality cannot be tightened, since infinitely many cubic graphs have equal
domination and edge domination numbers. The cubic case stays open, and even
linear arguments from the local structure cannot close it. The middle degrees
stay open beyond the graphs already settled.
\end{abstract}

\medskip
\noindent\textbf{Keywords.} domination number, edge domination number, minimum
maximal matching, regular graph, cubic graph, generalized Petersen graph,
Lov\'asz Local Lemma, dominating transversal.

\smallskip
\noindent\textbf{2020 Mathematics Subject Classification.} Primary 05C69;
Secondary 05C35, 05C70, 05D40.

\section{Introduction}\label{sec:intro}

Throughout this paper, graphs are finite, simple, and undirected. A dominating set is a set
$S$ of vertices such that every vertex outside $S$ has a neighbour in $S$; the
domination number $\gamma(G)$ is the size of a minimum dominating set. An
independent set is a set of vertices no two of which are adjacent; the
independence number $\alpha(G)$ is the size of a largest independent set. A matching
is a set of edges no two of which share a vertex, is maximal if no edge can be
added to it, and is perfect if every vertex of $G$ lies on one of its edges; the edge domination number $\edom(G)$ is the size of a minimum
maximal matching. A graph is $\Delta$-regular if every vertex has exactly $\Delta$
neighbours. Baste et al.~\cite{Baste2020} conjectured the following.

\begin{conjecture}\label{conj:baste}
If $G$ is a $\Delta$-regular graph with $\Delta \ge 1$, then $\gamma(G) \le \edom(G)$.
\end{conjecture}

Without regularity the inequality can fail; for example, the path $P_4$ has
$\gamma = 2 > 1 = \edom$. With regularity, the conjecture holds trivially for
$\Delta \le 2$, where every component is an edge or a cycle, giving $\gamma =
\edom$, and is sharp at $\Delta = 3$, attained by two triangles joined by a
perfect matching~\cite{Baste2020}.

The known evidence for the conjecture is of two kinds. On the
analytic side, a multiplicative bound $\gamma \le (1+\varepsilon_\Delta)\edom$ with
$\varepsilon_\Delta \to 0$ as $\Delta \to \infty$ holds for all $\Delta$ but falls
short of $\gamma \le \edom$, while a combination of an upper bound on $\gamma$ and
a lower bound on $\edom$ settles it for $\Delta \ge 13$, as Felix Joos observed~\cite{Baste2020}. The combination uses $\gamma \le cn$ and
$\edom \ge \Delta n/(4\Delta-2)$~\cite{BasteMMM2021}, so $\gamma \le \edom$ once $c \le \Delta/(4\Delta-2)$. The argument
depends on the upper bound only through the constant $c$. The classical bound
$\gamma \le (1+\ln(\Delta+1))\,n/(\Delta+1)$~\cite{AlonSpencer2008} satisfies $c \le
\Delta/(4\Delta-2)$ for $\Delta \ge 13$, whereas the sharpest published bounds
satisfy it for every $\Delta \ge 9$ but no lower (Table~\ref{tab:reach}).
\begin{table}[ht]
\centering
\begin{tabular}{cccc}
\toprule
$\Delta$ & best known $c$ & $\Delta/(4\Delta-2)$ & settles? \\
\midrule
$12$ & $0.218244$ & $6/23$ & \checkmark \\
$11$ & $0.229463$ & $11/42$ & \checkmark \\
$10$ & $0.242128$ & $5/19$ & \checkmark \\
$9$ & $0.256566$ & $9/34$ & \checkmark \\
$8$ & $0.273213$ & $4/15$ & $\times$ \\
$7$ & $2/7$ & $7/26$ & $\times$ \\
$6$ & $127/418$ & $3/11$ & $\times$ \\
$5$ & $1/3$ & $5/18$ & $\times$ \\
$4$ & $4/11$ & $2/7$ & $\times$ \\
$3$ & $5/14$ & $3/10$ & $\times$ \\
\bottomrule
\end{tabular}
\caption{Best known domination bound $\gamma \le cn$ at each minimum degree
$\Delta$ against the threshold $\Delta/(4\Delta-2)$. The combination settles
Conjecture~\ref{conj:baste} where $c \le \Delta/(4\Delta-2)$. Bounds are
from~\cite{BujtasKlavzar2016} for $8 \le \Delta \le 12$,
\cite{BujtasHenning2025} for $\Delta = 7$, and~\cite{BujtasHenning2021} for
$\Delta = 6$, which also tabulates the $\Delta = 4$ and $\Delta = 5$ bounds. The
$\Delta = 3$ bound $\lfloor 5n/14 \rfloor$~\cite{KostochkaStocker2009} holds
for connected cubic graphs of order greater than eight.}
\label{tab:reach}
\end{table}
The combination leaves $\Delta \in \{3,\dots,8\}$ open.\footnote{Baste et
al.~\cite{BasteMMM2021} separately conjectured a companion upper bound $\edom(G)
\le (2\Delta-1)n/(4\Delta) + 1/2$ for connected regular graphs. Its cubic case, that
a connected cubic graph satisfies $\edom(G) \le 5n/12 + 1/2$ with equality only at
$K_{3,3}$, is proved in~\cite{CamesVanBatenburg2022}. That conjecture bounds
$\edom$, whereas Conjecture~\ref{conj:baste} compares $\gamma$ to $\edom$; at
$K_{3,3}$ the upper bound is tight while $\gamma \le \edom$ is strict,
$\gamma(K_{3,3}) = 2$ and $\edom(K_{3,3}) = 3$.}

On the structural side, the conjecture is settled for cubic
claw-free graphs~\cite{Baste2020}, for all claw-free graphs of minimum degree at
least two~\cite{Civan2023}, and for the larger class of fork-free graphs of
minimum degree at least two~\cite{ManiyaPradhan2024}; equality is characterised for claw-free
cubic graphs~\cite{PanPanTie2025}. All of these leave
open the general cubic case, a cubic graph containing a fork.

A stronger inequality has been conjectured independently. An automated
conjecturing program, TxGraffiti, proposed that every $\Delta$-regular graph with
$\Delta \ge 1$ satisfies $\ind(G) \le \edom(G)$, where the independent domination number
$\ind(G)$ is the size of a smallest maximal independent
set~\cite{Davila2025reverie}. A maximal independent set is dominating, so
$\gamma(G) \le \ind(G)$; the TxGraffiti conjecture therefore implies
Conjecture~\ref{conj:baste} and is itself open for every $\Delta \ge 3$. The
argument used here concerns the weaker inequality, Conjecture~\ref{conj:baste}, and
not the stronger $\ind(G) \le \edom(G)$; Section~\ref{sec:lll} shows why.

This paper makes three contributions. The first is a reduction of the conjecture to
picking one endpoint of each edge of a minimum maximal matching so that the chosen
vertices form a dominating set (Section~\ref{sec:reduction}). By the lopsided Local
Lemma~\cite{ErdosSpencer1991}, such a choice exists at every degree at least seven,
so the exact bound $\gamma(G) \le \edom(G)$ holds there, closing the degrees
$\{7,8\}$ (Section~\ref{sec:lll}). The
second is a reformulation of that reduction as a satisfiability instance,
balanced in the cubic case (Section~\ref{sec:sat}). A small enough instance is
satisfiable, so the reduction settles each remaining degree up to a bounded number
of vertices, forty-eight for cubic graphs.\footnote{\url{https://github.com/ChakshuGupta13/lab}}
The third is a barrier (Sections~\ref{sec:cubic},~\ref{sec:barrier}, and~\ref{sec:sat}).
For cubic graphs the reduction also recasts as a potential, and no linear or
fractional argument over its natural relaxation closes the case.
Theorem~\ref{thm:cex} makes the obstruction integral, a cubic graph on fifty
vertices whose unique minimum maximal matching has no dominating endpoint choice,
though the bound holds there. Equality holds, moreover, on an infinite family of
cubic graphs (Proposition~\ref{prop:sharp}). An argument closing the cubic case
must be exactly tight on that family while reasoning beyond the endpoint choices
of minimum maximal matchings.

\section{A reduction to transversals}\label{sec:reduction}

The reduction of Conjecture~\ref{conj:baste} to a choice of endpoints begins with
a maximal matching $M$ of $G$, which leaves the vertices in $V(G) \setminus V(M)$
unsaturated, an independent set by maximality.

\begin{definition}[Transversal]\label{def:transversal}
A transversal of a matching $M$ in a graph $G$ is a set $T \subseteq V(M)$
containing exactly one endpoint of each edge of $M$, an $M$-edge; this
endpoint is chosen.
The transversal is dominating if every vertex of $G$ unsaturated by $M$ has a neighbour
in $T$.
\end{definition}

\begin{proposition}[Reduction]\label{prop:reduction}
If a minimum maximal matching $M$ of a graph $G$ admits a dominating
transversal, then $\gamma(G) \le |M| = \edom(G)$.
\end{proposition}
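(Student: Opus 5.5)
The plan is to show that the dominating transversal $T$ is itself a dominating set of $G$. Its size is $|M|$, since it contains exactly one endpoint of each $M$-edge and distinct $M$-edges are disjoint. Because $M$ is a minimum maximal matching, $|M| = \edom(G)$, so $\gamma(G) \le |T| = \edom(G)$ follows at once.

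To verify domination, I will partition $V(G)$ into $V(M)$ and the unsaturated set $U = V(G)\setminus V(M)$ and handle each part separately.

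For a vertex $v \in V(M)$, let $uv$ be the $M$-edge containing it. The transversal chooses exactly one of $u$ and $v$. If it chooses $v$, then $v \in T$. Otherwise it chooses $u$, and $v$ has the neighbour $u \in T$. Either way $v$ is dominated.

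For a vertex $w \in U$, the definition of a dominating transversal (Definition~\ref{def:transversal}) gives $w$ a neighbour in $T$ directly. Hence every vertex outside $T$ has a neighbour in $T$, so $T$ is a dominating set.

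There is no real obstacle here. The only point that needs care is that the matched vertices are dominated automatically through their own $M$-edge, so the hypothesis is needed only for the unsaturated vertices. Maximality of $M$, which makes $U$ independent, is not used in the domination argument. It enters only through the identity $|M| = \edom(G)$.
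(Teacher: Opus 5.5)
Your proof is correct and follows the paper's own argument essentially step for step. In both, the matched vertices are dominated through their own $M$-edge and the unsaturated vertices by the definition of a dominating transversal, so $T$ is a dominating set of size $|M| = \edom(G)$.
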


\begin{proof}
Let $T$ be a dominating transversal of $M$, and let $Z = V(G) \setminus V(M)$ be
the unsaturated vertices. Every vertex of $V(M)$ either is a chosen endpoint or shares
its $M$-edge with a chosen endpoint, and is therefore dominated. As $T$ is dominating, every vertex of $Z$
has a neighbour in $T$. Since $V(M) \cup Z = V(G)$, $T$ is a dominating set of $G$ with
$|T| = |M|$, giving $\gamma(G) \le |M|$. As $M$ is a minimum maximal matching,
$|M| = \edom(G)$.
\end{proof}

\noindent
This reduces Conjecture~\ref{conj:baste} to finding a dominating transversal of a
minimum maximal matching. The set chosen at random in the proof
of~\cite[Theorem~1]{Baste2020} is a transversal. Computing $\edom(G)$ is equivalent to finding a largest independent set whose removal from $G$ leaves a perfect matching.

\begin{lemma}[Edge-domination formula]\label{lem:alphapm}
Let $G$ be a graph on $n$ vertices, and let $\alphapm(G)$ be the size of a largest
independent set $S$ such that $G - S$ has a perfect matching. Then
\[
  \edom(G) \;=\; \frac{n - \alphapm(G)}{2}.
\]
\end{lemma}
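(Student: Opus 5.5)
The plan is to prove the two inequalities $\edom(G) \ge (n-\alphapm(G))/2$ and $\edom(G) \le (n-\alphapm(G))/2$ separately, by passing back and forth between maximal matchings and the independent sets of vertices they leave unsaturated.

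For the first inequality, I take a minimum maximal matching $M$, so $|M| = \edom(G)$. Set $Z = V(G)\setminus V(M)$. Maximality of $M$ means no edge of $G$ joins two vertices of $Z$, so $Z$ is independent. Moreover $G - Z$ has vertex set $V(M)$, and $M$ is a perfect matching of it. Hence $Z$ is admissible in the definition of $\alphapm$, which gives $\alphapm(G) \ge |Z| = n - 2|M|$. Rearranging yields $\edom(G) \ge (n - \alphapm(G))/2$.

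For the reverse inequality, I take a largest independent set $S$ such that $G - S$ has a perfect matching $M$. Then $|M| = (n - |S|)/2 = (n-\alphapm(G))/2$. It remains to check that $M$ is a maximal matching of $G$, since then $\edom(G) \le |M|$. The vertices unsaturated by $M$ are exactly those of $S$. Any edge that could be added to $M$ would have to join two unsaturated vertices, that is, two vertices of $S$, and no such edge exists because $S$ is independent. So $M$ is maximal, and the two inequalities together give the formula.

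The only point needing care is the correspondence between maximality of $M$ and independence of its unsaturated set. This holds in both directions: $M$ is maximal if and only if $V(G)\setminus V(M)$ is independent. Once that is stated, the two directions are immediate, and I expect no real obstacle. I would also note the degenerate case $S = \emptyset$, which corresponds to $G$ having a perfect matching, and the parity condition that $n - |S|$ is even, which holds automatically whenever $G - S$ has a perfect matching.
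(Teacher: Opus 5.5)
Your proposal is correct and follows the paper's proof step for step. In one direction, the unsaturated set $Z$ of a minimum maximal matching $M$ is independent and $M$ is a perfect matching of $G-Z$, which gives $\edom(G)\ge (n-\alphapm(G))/2$. In the other, a perfect matching of $G-S$ for an optimal $S$ is maximal because its unsaturated set $S$ is independent, which gives the reverse inequality.
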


\begin{proof}
Let $M$ be a minimum maximal matching and $Z = V(G) \setminus V(M)$ the vertices $M$ leaves unsaturated.
By maximality, $Z$ is independent, and $M$ is a perfect matching of $G - Z$.
Hence $\alphapm(G) \ge |Z| = n - 2|M|$, and $|M| = \edom(G)$ gives $\edom(G) \ge (n - \alphapm(G))/2$.

\medskip\noindent
Conversely, let $S$ be an independent set of size $\alphapm(G)$ with a perfect
matching $M$ of $G - S$. The vertices unsaturated by $M$ are exactly those of
$S$. As $S$ is independent, no edge has both ends unsaturated, so $M$ is maximal,
and $\edom(G) \le |M| = (n - \alphapm(G))/2$. The two inequalities give the stated
equality.
\end{proof}

\noindent
Conjecture~\ref{conj:baste} is therefore equivalent to $2\gamma(G) +
\alphapm(G) \le n$ for every $\Delta$-regular $G$ with $\Delta \ge 1$. The
regularity-free case is that of independence number at most $n/3$.

\begin{corollary}[Regularity-free case]\label{cor:alpha}
If $G$ is a graph on $n$ vertices with $\alpha(G) \le n/3$, then $\gamma(G) \le
\edom(G)$.
\end{corollary}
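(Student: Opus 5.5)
Using Lemma~\ref{lem:alphapm}, the target inequality $\gamma(G)\le\edom(G)$ is equivalent to $2\gamma(G)+\alphapm(G)\le n$. So I would bound the two terms separately and show each is small enough when $\alpha(G)\le n/3$.

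The bound on $\alphapm$ is immediate. By definition $\alphapm(G)$ is the size of some independent set, so $\alphapm(G)\le\alpha(G)\le n/3$.

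The bound on $\gamma$ comes from a maximal independent set. Take any maximal independent set $I$. By maximality, every vertex outside $I$ has a neighbour in $I$, so $I$ is dominating. Hence $\gamma(G)\le|I|\le\alpha(G)\le n/3$.

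Combining the two bounds gives $2\gamma(G)+\alphapm(G)\le 2n/3+n/3=n$. Rearranging via Lemma~\ref{lem:alphapm} yields $\gamma(G)\le (n-\alphapm(G))/2=\edom(G)$.

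There is one degenerate case. The argument uses no regularity and does not even assume $G$ has an edge: if $G$ is edgeless and nonempty, then $\alpha(G)=n>n/3$, so the hypothesis excludes it. For the empty graph everything is zero.

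I do not expect a real obstacle here. The only point worth stating explicitly is that a maximal independent set dominates, which is the standard fact $\gamma\le\ind\le\alpha$ already noted in the introduction.
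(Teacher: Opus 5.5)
Your proof is correct and takes the same route as the paper: $\gamma \le \alpha$ because a maximal independent set dominates, $\alphapm \le \alpha$ because $\alphapm$ is the size of an independent set, and then Lemma~\ref{lem:alphapm}. The only difference is bookkeeping: you add the two $n/3$ bounds to get $2\gamma + \alphapm \le n$, whereas the paper writes the chain $\edom = (n-\alphapm)/2 \ge (n-\alpha)/2 \ge n/3 \ge \alpha \ge \gamma$.
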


\begin{proof}
A maximal independent set is dominating, and a largest one has size $\alpha(G)$,
so $\gamma(G) \le \alpha(G)$. A set achieving $\alphapm(G)$ is independent, so
$\alphapm(G) \le \alpha(G)$, and Lemma~\ref{lem:alphapm} gives $\edom(G) = (n -
\alphapm(G))/2$.
With $\alpha(G) \le n/3$,
\[
  \edom(G) \;\ge\; \frac{n - \alpha(G)}{2} \;\ge\; \frac{n}{3} \;\ge\; \alpha(G)
  \;\ge\; \gamma(G).
\]
\end{proof}

\noindent
Beyond that regularity-free case, a counting identity for $\Delta$-regular graphs
underlies the lower bound on $\edom$ of~\cite{BasteMMM2021} and shows when the
bound is tight.

\begin{proposition}[Counting identity]\label{prop:identity}
Let $G$ be a $\Delta$-regular graph on $n$ vertices, let $M$ be a maximal matching
of $G$, and let $q$ be the number of edges of $G$ with both ends in $V(M)$ that do
not lie in $M$. Then
\[
  (4\Delta - 2)\,|M| \;=\; \Delta n + 2q .
\]
Hence $\edom(G) \ge \Delta n/(4\Delta - 2)$, with equality if and only if $G$ has
a maximal matching that is induced.
\end{proposition}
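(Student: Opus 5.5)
The plan is to double count degrees. Write $Z = V(G)\setminus V(M)$, so $|V(M)| = 2|M|$ and $|Z| = n - 2|M|$. Since $M$ is maximal, $Z$ is independent, and every edge of $G$ is of exactly one of four kinds: an $M$-edge, one of the $q$ non-$M$ edges inside $V(M)$, or an edge between $V(M)$ and $Z$. Let $e(V(M),Z)$ denote the number of edges of the last kind.

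First, sum degrees over $Z$. Every vertex of $Z$ has all $\Delta$ of its neighbours in $V(M)$, which gives
\[
  e(V(M),Z) \;=\; \Delta\,|Z| \;=\; \Delta(n - 2|M|).
\]
Next, sum degrees over $V(M)$. Each $M$-edge contributes $2$ to this sum, each of the $q$ other internal edges contributes $2$, and each edge to $Z$ contributes $1$. Hence
\[
  2\Delta|M| \;=\; 2|M| + 2q + \Delta(n-2|M|).
\]
Rearranging gives $(4\Delta-2)|M| = \Delta n + 2q$, which is the stated identity.

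For the bound, apply the identity to a minimum maximal matching. Since $q \ge 0$, this gives $\edom(G) = |M| \ge \Delta n/(4\Delta-2)$.

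For the equality case, note that $q = 0$ says precisely that $V(M)$ spans no edges besides those of $M$, that is, $M$ is an induced matching. If some maximal matching $M'$ is induced, the identity gives $|M'| = \Delta n/(4\Delta-2)$. Then $\edom(G) \le |M'|$, and combined with the lower bound this yields equality. Conversely, if $\edom(G) = \Delta n/(4\Delta-2)$, then a minimum maximal matching has $q = 0$, so it is induced.

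The only point needing care is checking that the edge classification is exhaustive and disjoint. This holds because maximality rules out edges inside $Z$. I do not anticipate any real obstacle.
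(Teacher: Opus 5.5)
Your proof is correct and follows the paper's argument essentially step for step: count degrees over $Z$ (independent by maximality) and over $V(M)$, classify the edges meeting $V(M)$, rearrange to get the identity, and then use $q \ge 0$ for the bound and $q = 0$ for equality. Your equality case is slightly more explicit than the paper's, since you check both directions separately. There is one harmless slip: you announce ``four kinds'' of edges but list three, the fourth (edges inside $Z$) being empty by maximality.
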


\begin{proof}
The $2|M|$ vertices of $V(M)$ have degree $\Delta$, so the edges of $G$ meeting
$V(M)$ occupy $2\Delta|M|$ endpoints there. Each $M$-edge occupies two of those
endpoints, each of the $q$ other edges inside $V(M)$ occupies two, and each edge
from $V(M)$ to the unsaturated set $Z$ occupies one, so
\[
  2\Delta|M| \;=\; 2|M| + 2q + e\bigl(Z, V(M)\bigr).
\]
Maximality leaves $Z$ independent, so each of its vertices sends all $\Delta$ of
its edges into $V(M)$, giving $e(Z, V(M)) = \Delta|Z| = \Delta(n - 2|M|)$.
Substituting, $2\Delta|M| = 2|M| + 2q + \Delta(n - 2|M|)$, and collecting the
terms in $|M|$ gives $(4\Delta - 2)|M| = \Delta n + 2q$. As $\Delta$ and $n$ are
fixed, the identity makes $|M|$ grow with $q$, so a minimum maximal matching
minimises $q$. Since $q \ge 0$, this gives $\edom(G) \ge \Delta n/(4\Delta - 2)$,
with equality exactly when some maximal matching has $q = 0$, that is, is
induced~\cite[Lemma~3(i)]{BasteMMM2021}.
\end{proof}

\begin{definition}[At-risk vertex]\label{def:atrisk}
Let $M$ be a maximal matching of a $\Delta$-regular graph $G$ with unsaturated set
$Z$. A vertex $u \in Z$ is at risk if its $\Delta$ neighbours lie on $\Delta$
distinct $M$-edges.
\end{definition}

\noindent
If a vertex of $Z$ instead has two neighbours that form a single $M$-edge, every
transversal chooses one of them, a neighbour of that vertex, so the vertex is
dominated regardless; the proof of Theorem~1 in~\cite{Baste2020} notes this, and
the proof of Theorem~2 there sets these safe vertices aside to work with the
at-risk rest. Only at-risk vertices can be left undominated. A transversal
covers an at-risk vertex when a chosen endpoint is adjacent to it, and
leaves it uncovered otherwise. A transversal is dominating if and only if
it covers every at-risk vertex.

\section{An exact bound for degree at least seven}\label{sec:lll}

By the reduction, it remains to cover every at-risk vertex by a single
transversal. The random transversal itself is not new. One endpoint of each
edge of a minimum maximal matching is selected independently with probability
$\tfrac12$; the resulting set has size $\edom(G)$, and a given at-risk vertex
escapes it with probability $2^{-\Delta}$, so a first moment over the escaping
vertices gives the multiplicative bound~\cite{Baste2020}. What follows changes
only the last step. Instead of adding the expected number of uncovered vertices
to the transversal, the Local Lemma is applied to show that with positive
probability none escapes, which removes the additive term and makes the bound
exact.

A random transversal covers all at-risk vertices with positive probability
when $\Delta$ is large, as two forms of the Lov\'asz Local Lemma show. The
symmetric form~\cite{ErdosLovasz1975}
states that if each event of a family has probability at most $p$, each is
mutually independent of all but at most $d$ of the others, and $e\,p\,(d+1) \le 1$,
then with positive probability none of them occurs. The lopsided
form~\cite{ErdosSpencer1991} replaces mutual independence with a weaker
lopsidependency condition, a form of negative dependence. A graph $H$ on the events
is a lopsidependency graph if
\begin{equation}\label{eq:lopsi}
  \Pr\Bigl[A_i \,\Big|\, \textstyle\bigwedge_{j \in S} \overline{A_j}\Bigr]
  \;\le\; \Pr[A_i]
  \qquad\text{for every } i \text{ and every } S \subseteq V(H)\setminus N_H[i].
\end{equation}
The same conclusion then holds whenever $H$ has maximum degree $d$ and
$e\,p\,(d+1) \le 1$.

\begin{theorem}[Local Lemma bound]\label{thm:lll}
Every $\Delta$-regular graph $G$ with $\Delta \ge 7$ satisfies
$\gamma(G) \le \edom(G)$.
\end{theorem}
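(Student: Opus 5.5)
The plan is to apply the lopsided Local Lemma to a uniformly random transversal of a minimum maximal matching, and then conclude with Proposition~\ref{prop:reduction}. Fix a minimum maximal matching $M$ of $G$ with unsaturated set $Z$. For each $M$-edge $e$, choose one of its two endpoints uniformly, independently over $e$; this is a product space with one fair coin $x_e$ per $M$-edge. For each at-risk vertex $u$ (Definition~\ref{def:atrisk}), let $A_u$ be the event that no neighbour of $u$ is chosen. The $\Delta$ neighbours of $u$ lie on $\Delta$ distinct $M$-edges, so $A_u$ says that on each of these $\Delta$ coins the endpoint \emph{other than} the neighbour of $u$ is chosen. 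Hence $A_u$ fixes $\Delta$ coordinates of the product space, and $\Pr[A_u] = 2^{-\Delta}$. If no event $A_u$ occurs, the transversal covers every at-risk vertex, so it is dominating. Proposition~\ref{prop:reduction} then gives $\gamma(G)\le\edom(G)$. If there are no at-risk vertices, every transversal is already dominating.

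The key step is to choose the lopsidependency graph $H$ economically. I join $u$ and $v$ in $H$ exactly when they have a common neighbour. A common neighbour $a$ lies on one $M$-edge, and both events demand that $a$ be unchosen, so they agree on that coordinate. Otherwise the supports of $A_u$ and $A_v$ are either disjoint, or they meet in an $M$-edge $ab$ with $u\sim a$ and $v\sim b$. In the latter case $A_u$ demands $b$ chosen while $A_v$ demands $a$ chosen, so the two events are incompatible on that coordinate.

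To verify \eqref{eq:lopsi}, fix $u$ and $S\subseteq V(H)\setminus N_H[u]$. Split $S$ into the set $S_0$ of events with support disjoint from that of $A_u$, and the rest $S_1$, which conflict with $A_u$. On $A_u$ every $A_j$ with $j\in S_1$ fails automatically. Independence of disjoint coordinates then gives
\[
\Pr\Bigl[A_u\wedge\textstyle\bigwedge_{j\in S}\overline{A_j}\Bigr]=\Pr[A_u]\,\Pr\Bigl[\textstyle\bigwedge_{j\in S_0}\overline{A_j}\Bigr].
\]
Meanwhile the denominator satisfies $\Pr[\bigwedge_{j\in S}\overline{A_j}]\le\Pr[\bigwedge_{j\in S_0}\overline{A_j}]$. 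Dividing yields \eqref{eq:lopsi}.

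Next comes the degree count in $H$. Each of the $\Delta$ neighbours $a$ of $u$ is saturated, and its $\Delta$ neighbours include its $M$-partner and $u$ itself. So $a$ has at most $\Delta-2$ neighbours in $Z\setminus\{u\}$, and therefore $d\le\Delta(\Delta-2)$. The lopsided criterion then requires
\[
e\,2^{-\Delta}\bigl(\Delta(\Delta-2)+1\bigr)=e\,2^{-\Delta}(\Delta-1)^2\le 1.
\]
At $\Delta=7$ this reads $36e\approx 97.9\le 128$. The ratio $(\Delta-1)^2/2^{\Delta}$ is decreasing for $\Delta\ge 7$, so the criterion holds for all larger $\Delta$. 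At $\Delta=6$ it fails ($25e>64$), consistent with the stated threshold.

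I expect the main obstacle to be this choice of $H$. With the naive dependency graph, which joins events sharing any $M$-edge, $d$ is about $\Delta(2\Delta-3)$, and the symmetric Local Lemma fails at $\Delta=7$ ($78e>128$). The whole gain comes from discarding the conflicting pairs. So the delicate point is the verification of \eqref{eq:lopsi} above, which relies on the events being cylinder events with opposite demands on shared coordinates, and on at-risk vertices seeing $\Delta$ distinct $M$-edges so that $A_u$ is a single cylinder.
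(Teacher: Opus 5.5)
Your lopsidependency graph is oriented the wrong way round, and the verification of \eqref{eq:lopsi} fails at its last step. Both of your displayed facts are correct: $\Pr[A_u \wedge \bigwedge_{j\in S}\overline{A_j}] = \Pr[A_u]\Pr[\bigwedge_{j\in S_0}\overline{A_j}]$ and $\Pr[\bigwedge_{j\in S}\overline{A_j}] \le \Pr[\bigwedge_{j\in S_0}\overline{A_j}]$. But dividing by the smaller denominator gives $\Pr[A_u \mid \bigwedge_{j\in S}\overline{A_j}] \ge \Pr[A_u]$, the reverse of what is required.

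This is not a slip you can patch, because conflicting events genuinely violate the condition. If $A_v$ conflicts with $A_u$, then $A_u \subseteq \overline{A_v}$. Taking $S=\{v\}$ gives $\Pr[A_u \mid \overline{A_v}] = 2^{-\Delta}/(1-2^{-\Delta}) > \Pr[A_u]$. Mutually exclusive bad events are exactly the ones the Local Lemma cannot ignore. The pairs it may ignore are the agreeing ones, which are positively correlated. So your common-neighbour graph is not a lopsidependency graph, and the count $d \le \Delta(\Delta-2)$ cannot be fed into the lopsided criterion.

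The repair is to swap the roles, which is what the paper does. Join $A_u$ and $A_v$ exactly when they demand different endpoints on some shared $M$-edge. For a set $S$ of non-neighbours of $A_u$, every $A_v$ with $v\in S$ agrees with $A_u$ on all shared coordinates. Conditioning on $A_u$ therefore turns $A_v$ into a weaker cylinder $A_v' \supseteq A_v$ on coordinates outside the support of $A_u$. Hence $\Pr[\bigwedge_{v\in S}\overline{A_v} \mid A_u] = \Pr[\bigwedge_{v\in S}\overline{A_v'}] \le \Pr[\bigwedge_{v\in S}\overline{A_v}]$, which is equivalent to \eqref{eq:lopsi}.

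The degree count changes accordingly. For each neighbour $a$ of $u$ with $M$-partner $b$, the conflicting events come from at-risk neighbours of $b$. Since $a$ is saturated and $u\not\sim b$, there are at most $\Delta-1$ of them, so $d \le \Delta(\Delta-1)$. The criterion $e\,2^{-\Delta}(\Delta^2-\Delta+1)\le 1$ still holds at $\Delta=7$, since $43e \approx 116.9 \le 128$, and the left side decreases for larger $\Delta$. The threshold $7$ survives, with less margin than your count suggested.

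Your closing remark should be reversed to match. The gain over the full dependency degree $\Delta(2\Delta-3)$ comes from discarding the $\Delta(\Delta-2)$ agreeing pairs and keeping the $\Delta(\Delta-1)$ conflicting ones, not the other way round.
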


\begin{proof}
A symmetric application of the Local Lemma
reaches $\Delta \ge 9$, and the lopsided form, which keeps only the conflicting
dependencies and discards the agreeing ones, reaches $\Delta \ge 7$.

\medskip\noindent
\emph{Setup.} Let $M$ be a minimum maximal matching of $G$, so $|M| = \edom(G)$. Choose one
endpoint of each $M$-edge independently and uniformly at random; this is a
random transversal $T$, encoded by independent variables $X_e$, one per $M$-edge,
each uniform on the two endpoints of $e$. For an at-risk vertex $u$, let $A_u$ be
the event that $u$ is uncovered, that is, that none of its $\Delta$ neighbours is
chosen. These neighbours lie on $\Delta$ distinct $M$-edges
(Definition~\ref{def:atrisk}), so $A_u$ is an
intersection of $\Delta$ independent events of probability $\tfrac12$, giving
\[
  \Pr[A_u] = 2^{-\Delta}.
\]
If no event $A_u$ occurs, then $T$ covers every at-risk vertex, hence is
dominating, and Proposition~\ref{prop:reduction} gives
$\gamma(G) \le |M| = \edom(G)$. It remains to show
$\Pr[\bigwedge_u \overline{A_u}] > 0$.

\medskip\noindent
Each $A_u$ is an atom of the $\Delta$ variables $\{X_e : e \text{ incident to a
neighbour of } u\}$, fixing each to leave that neighbour unchosen. Two events
$A_u, A_v$ share a variable $X_e$ exactly when each of $u$ and $v$ has a neighbour
among the endpoints of $e$.

\medskip\noindent
\emph{Symmetric bound.} Every shared-variable dependency is counted. The event
$A_u$ is mutually independent of all $A_v$ sharing no variable with it. For a fixed $M$-edge
$e = \{x, x'\}$ with $u \sim x$, an at-risk $v \neq u$
sharing $X_e$ is a neighbour of $x$ or of $x'$ and, being unsaturated, is
neither endpoint. The
neighbours of $x$ besides $u$ and $x'$ number $\Delta-2$, and those of $x'$
besides $x$ number $\Delta-1$, so at most $2\Delta-3$ per edge and
$\Delta(2\Delta-3)$ in total. The
symmetric Local Lemma applies when $e\,2^{-\Delta}\bigl(\Delta(2\Delta-3)+1\bigr)
\le 1$, which holds for $\Delta \ge 9$, the left side being $\approx 0.72$ at
$\Delta = 9$ and $\approx 1.11$ at $\Delta = 8$.

\medskip\noindent
\emph{Lopsided bound.} Counting only the conflicting dependencies improves the
threshold. Let $H$ join
$A_u \sim A_v$ exactly when they conflict, sharing a variable $X_e$ on which they
demand different endpoints. This conflict graph is a lopsidependency graph by the
standard argument for atomic events. Fix a set $S$ of events none joined to
$A_u$; since $\Pr[A_u] > 0$, condition~\eqref{eq:lopsi} is equivalent to
$\Pr[B \mid A_u] \le \Pr[B]$, where $B = \bigwedge_{v \in S}\overline{A_v}$. Each
such $A_v$ agrees with $A_u$ on every shared variable, so conditioning on
$A_u$ leaves $A_v$ as a weaker event $A_v' \supseteq A_v$ on the remaining variables,
independent of $A_u$; equivalently, $\overline{A_v'} \subseteq \overline{A_v}$. Hence
\[
  \Pr[B \mid A_u] = \Pr\Bigl[\textstyle\bigwedge_{v\in S}\overline{A_v'}\Bigr]
  \le \Pr[B].
\]

\medskip\noindent
In $H$, $A_u$ is joined, for each $M$-edge $e = \{x,x'\}$ with
$u \sim x$, to the at-risk vertices adjacent to the opposite endpoint $x'$, which
demand $X_e = x$ where $A_u$ demands $x'$. The partner $x$ is saturated, hence
not at-risk, so the joined events lie among the other $\Delta - 1$ neighbours of
$x'$, at most $\Delta - 1$ per edge, and $H$
has maximum degree at most $\Delta(\Delta-1)$. The lopsided
Local Lemma applies when $e\,2^{-\Delta}\bigl(\Delta(\Delta-1)+1\bigr) \le 1$,
which holds for $\Delta \ge 7$, the left side being $\approx 0.91$ at
$\Delta = 7$ and $\approx 1.32$ at $\Delta = 6$. Hence
$\Pr[\bigwedge_u \overline{A_u}] > 0$, completing the proof.
\end{proof}

\begin{remark}
Theorem~\ref{thm:lll} proves the weaker of the two inequalities of
Section~\ref{sec:intro}, and the bounds above do not by themselves yield the
stronger. Certifying $\ind \le \edom$ the same way would require the random
transversal to be independent as well as dominating, and that constraint is not
locally sparse. Every maximal matching of $K_{r,r}$ is perfect, and a transversal
of one is an independent set exactly when all $r$ selections fall on the same
side, an event of probability $2^{1-r}$. Failure of independence is therefore
typical rather than rare, which is the regime the Local Lemma cannot treat. This
is not an impossibility, since $\ind(K_{r,r}) = \edom(K_{r,r}) = r$, and either side of
the bipartition is an independent dominating set of that size. The two
inequalities are nonetheless distinct inside the range settled above. Since
$\gamma(K_{r,r}) = 2$, the weaker inequality carries slack $r - 2$ at every
$\Delta = r \ge 7$ while the stronger stays tight.
\end{remark}

\begin{remark}
The degree threshold seven is intrinsic to the method. The lopsided criterion
clears at $\Delta \ge 7$, and on the full dependency graph, of
degree $\Delta(2\Delta-3)$, both the symmetric criterion and the optimal
criterion~\cite{Shearer1985} clear only at $\Delta \ge 9$; all three thresholds
are recorded in the accompanying code. None reaches the remaining degrees
$\Delta \in \{3,4,5,6\}$, where a dependency degree $d$ of order $\Delta^2$ makes
$e\,2^{-\Delta}(d+1)$ exceed $1$. Even the optimal criterion, computed on the
sparser lopsidependency graph of an explicit connected cubic graph, is
unsatisfied at $p = 2^{-3}$. Two sharper criteria fare no better. The
orderability improvement~\cite{Harris2016} at $k = \Delta$ and variable-degree
$2(\Delta-1)$ is unsatisfied for every $\Delta \le 6$, and the intersection
LLL~\cite{HeLiSun2023} adds to $p$ a correction of order $2^{-4\Delta}$, far
below the gap needed to close $\Delta = 6$. A different argument is required.
\end{remark}

\section{A local-search reformulation for cubic graphs}\label{sec:cubic}

Throughout this section $G$ is cubic, so $\Delta = 3$, and $M$ is a maximal matching.
By Proposition~\ref{prop:reduction}, Conjecture~\ref{conj:baste} for $G$ follows
once a minimum maximal matching admits a dominating transversal. Whether every
maximal matching admits one, a stronger property than the reduction needs, is
studied here by local search on the following potential.

The potential depends on a transversal via the multiplicities of the at-risk
vertices. For an at-risk vertex $u$, let
$\mathrm{mult}(u)$ be the
number of chosen endpoints adjacent to $u$; then $u$ is uncovered exactly when
$\mathrm{mult}(u) = 0$, and $\mathrm{mult}(u) \in \{0,1,2,3\}$ as $u$ has three
neighbours on distinct $M$-edges. Let $\beta$ be
the number of uncovered at-risk vertices and $P_1$ the number with
$\mathrm{mult} = 1$, and set
\[
  \Pot \;=\; 4\beta + P_1 .
\]
The weights grade an at-risk vertex by its exposure, giving $4$ when
uncovered, $1$ when a single chosen neighbour covers it, and $0$ when two or
three do. A
flip of an $M$-edge exchanges its chosen endpoint for the other. Since
$\Pot$ is a non-negative integer, every sequence of flips each strictly decreasing
$\Pot$ terminates; whether it can terminate only at $\beta = 0$ is examined below.

\begin{proposition}[Per-edge characterisation]\label{prop:peredge}
For an $M$-edge $e$ with chosen endpoint $c$ and unchosen endpoint $\bar c$, let
$a_e, b_e$ count the at-risk neighbours of $\bar c$ with $\mathrm{mult} = 0$ and
$1$, and $p_e, q_e$ those of $c$ with $\mathrm{mult} = 1$ and $2$. Then flipping
$e$ changes $\Pot$ by exactly
$(3p_e + q_e) - (3a_e + b_e)$. Hence a transversal is a local minimum of
$\Pot$ under flips if and only if $3p_e + q_e \ge 3a_e + b_e$ for every $M$-edge
$e$.
\end{proposition}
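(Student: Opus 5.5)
The plan is a direct bookkeeping argument. I track how a single flip of $e$ changes the multiplicity of each at-risk vertex, and sum the resulting changes in the weight. Write $w(m)$ for the contribution of an at-risk vertex of multiplicity $m$ to $\Pot$, so $w(0)=4$, $w(1)=1$, $w(2)=w(3)=0$, and $\Pot=\sum_u w(\mathrm{mult}(u))$ over at-risk $u$. Unsaturated vertices that are not at risk, and saturated vertices, do not enter $\Pot$ at all, so only at-risk vertices need to be followed.

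\emph{Step 1: localise the effect of the flip.} Flipping $e$ removes $c$ from $T$ and adds $\bar c$. Every other $M$-edge keeps its chosen endpoint. Hence $\mathrm{mult}(u)$ changes only for at-risk $u$ adjacent to $c$ or to $\bar c$. The key structural observation comes from Definition~\ref{def:atrisk}: an at-risk vertex has its three neighbours on three distinct $M$-edges, so it cannot be adjacent to both $c$ and $\bar c$. Since $G$ is simple, it is adjacent to $c$ (or $\bar c$) via at most one edge. Thus each affected at-risk vertex falls into exactly one of two cases:
\begin{itemize}
\item if $u$ is adjacent to $c$, then $\mathrm{mult}(u)$ drops by exactly one, and in particular it was at least $1$;
\item if $u$ is adjacent to $\bar c$, then $\mathrm{mult}(u)$ rises by exactly one, and it was at most $2$, since $\bar c$ was not chosen.
\end{itemize}
This disjointness is the only point needing care. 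Without it, one vertex could receive both changes and the count would not split.

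\emph{Step 2: tabulate the weight changes.} For a neighbour of $c$, the change is $w(m-1)-w(m)$:
\begin{itemize}
\item $m=1$ gives $3$;
\item $m=2$ gives $1$;
\item $m=3$ gives $0$.
\end{itemize}
For a neighbour of $\bar c$, the change is $w(m+1)-w(m)$:
\begin{itemize}
\item $m=0$ gives $-3$;
\item $m=1$ gives $-1$;
\item $m=2$ gives $0$.
\end{itemize}
Summing over the affected vertices, with the counts $p_e,q_e$ (neighbours of $c$ with $\mathrm{mult}=1,2$) and $a_e,b_e$ (neighbours of $\bar c$ with $\mathrm{mult}=0,1$), gives a change in $\Pot$ of exactly $(3p_e+q_e)-(3a_e+b_e)$.

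\emph{Step 3: characterise local minima.} A transversal is a local minimum under flips exactly when no single flip strictly decreases $\Pot$. By Step 2 this means $(3p_e+q_e)-(3a_e+b_e)\ge 0$ for every $M$-edge $e$, which is the stated criterion.

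I expect no real obstacle; the only step requiring justification is the disjointness in Step 1, which follows immediately from the at-risk definition.
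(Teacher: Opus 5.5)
Your proposal is correct and follows the paper's own proof essentially step for step. It uses the same weights $w(0)=4$, $w(1)=1$, $w(2)=w(3)=0$, the same observation that no at-risk vertex neighbours both endpoints of $e$, and the same tabulation of the changes $\pm 3$, $\pm 1$ and $0$; you also spell out the local-minimum equivalence for $\Pot$, which the paper leaves as immediate.
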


\begin{proof}
Flipping $e$ increments $\mathrm{mult}$ on the at-risk neighbours of $\bar c$ and
decrements it on the at-risk neighbours of $c$; no at-risk vertex is adjacent to
both endpoints of $e$, since that would place two of its neighbours on the one
edge $e$. Writing $\Pot = \sum_u
w(\mathrm{mult}(u))$ with weights $w(0) = 4$, $w(1) = 1$, $w(2) = w(3) = 0$, an
increment $0\!\to\!1$ contributes $w(1)-w(0) = -3$, an increment $1\!\to\!2$
contributes $-1$, and $2\!\to\!3$ contributes $0$; symmetrically the decrements
$1\!\to\!0$, $2\!\to\!1$, $3\!\to\!2$ contribute $+3$, $+1$, $0$. As $\bar c$ is
unchosen, its at-risk neighbours have $\mathrm{mult} \le 2$, so its mult-$0$ and
mult-$1$ neighbours, $a_e$ and $b_e$, change $\Pot$ by $-3a_e - b_e$; as $c$ is
chosen, its at-risk neighbours have $\mathrm{mult} \ge 1$, so its mult-$1$ and
mult-$2$ neighbours, $p_e$ and $q_e$, change it by $3p_e + q_e$. Summing gives the
change $(3p_e + q_e) - (3a_e + b_e)$.
\end{proof}

\begin{remark}
The characterisation of Proposition~\ref{prop:peredge} was verified against direct
recomputation on all $52{,}044$ transversals with $\beta \ge 1$ of all cubic
graphs on $n \le 12$ vertices, with no discrepancy.
\end{remark}

\begin{corollary}[Initial case]\label{cor:case1}
Let $u$ be an uncovered at-risk vertex and $e$ an $M$-edge incident to a neighbour
of $u$, with chosen endpoint $c$. If $c$ has no at-risk neighbour of $\mathrm{mult}
= 1$, then flipping $e$ strictly decreases $\Pot$.
\end{corollary}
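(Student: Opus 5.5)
The plan is to apply Proposition~\ref{prop:peredge} directly to the edge $e$ and to bound the two sides of the change $(3p_e+q_e)-(3a_e+b_e)$ separately.

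\textbf{Identifying the endpoints.} Write $e=\{x,x'\}$ with $u\sim x$. Since $u$ is uncovered, no neighbour of $u$ is chosen, so $x$ is the unchosen endpoint $\bar c$ and $x'=c$. This identification is the one point that needs care: the statement names $c$ as chosen, and we must confirm that it is the endpoint \emph{not} adjacent to $u$. That follows because $\mathrm{mult}(u)=0$. Moreover, $u$ is at risk, so its neighbours lie on distinct $M$-edges, and $u$ cannot also be adjacent to $c$. Hence $u$ is an at-risk neighbour of $\bar c$ with $\mathrm{mult}=0$, which gives $a_e\ge 1$.

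\textbf{Bounding the two sides.} The hypothesis says $c$ has no at-risk neighbour of $\mathrm{mult}=1$, so $p_e=0$.

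For $q_e$, we bound the number of at-risk neighbours of $c$. The vertex $c$ has degree three, and one of its neighbours is its partner $\bar c$, which is saturated and hence not at risk. So $c$ has at most two at-risk neighbours, and $q_e\le 2$.

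\textbf{Conclusion.} The change in $\Pot$ is therefore
\[
(3p_e+q_e)-(3a_e+b_e)\le 2-3<0.
\]
So flipping $e$ strictly decreases $\Pot$.

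There is no real obstacle in this argument. The only subtle step is the endpoint identification above, which relies on $u$ being uncovered and at risk.
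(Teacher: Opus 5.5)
Your proposal is correct and follows essentially the same argument as the paper. Both apply Proposition~\ref{prop:peredge}, identify $u$'s neighbour on $e$ as the unchosen endpoint $\bar c$ to get $a_e \ge 1$, read $p_e = 0$ off the hypothesis, and bound $q_e \le 2$ because $c$ has only two neighbours besides its partner, giving a change of at most $2 - 3 < 0$.
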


\begin{proof}
By Proposition~\ref{prop:peredge}, flipping $e$ changes $\Pot$ by
$(3p_e + q_e) - (3a_e + b_e)$. As $u$ is uncovered, the neighbour of $u$ on $e$ is
unchosen and so equals $\bar c$, giving $a_e \ge 1$. The hypothesis is $p_e = 0$,
and $c$ has only two neighbours besides its partner $\bar c$, so $q_e \le 2$. The
change is thus at most $q_e - 3a_e \le 2 - 3 = -1 < 0$.
\end{proof}

\begin{remark}
Beyond this first case, an exhaustive check over all cubic graphs on $n \le 16$
vertices, all their maximal matchings, and all transversals found that every
transversal with $\beta \ge 1$ admits a flip strictly decreasing $\Pot$, so none
is a $\Pot$-local minimum, the largest case $n = 16$ accounting
for $54{,}715{,}529$ such transversals.
\end{remark}

\section{A barrier to fractional arguments}\label{sec:barrier}

Discharging and Hall-type arguments on the per-edge inequalities of
Proposition~\ref{prop:peredge} cannot force the uncovered-vertex density to zero, as the witness below
shows. For an $M$-edge of a cubic graph,
the at-risk neighbours of the chosen endpoint have multiplicities in $\{1,2,3\}$
and those of the unchosen endpoint in $\{0,1,2\}$, at most two on each side; the
type of the edge records these two multisets.

A $\Pot$-local minimum induces a density over types, namely the fraction of
$M$-edges of each type, and this density satisfies two families of linear
constraints. First, the per-edge inequality of Proposition~\ref{prop:peredge}
holds on every edge,
\[
  3p + q \;\ge\; 3a + b.
\]
Second, a counting constraint relates the incidences. A mult-$1$ at-risk vertex is a chosen
neighbour on one incident edge and an unchosen neighbour on two, so its unchosen
incidences are twice its chosen ones, and a mult-$2$ vertex is chosen on two
edges and unchosen on one. The density of every genuine $\Pot$-local minimum
satisfies these constraints, so the resulting linear program is a relaxation,
its uncovered-vertex density $\rho$ the fractional analogue of the count $\beta$ in
Section~\ref{sec:cubic}. If the relaxation forced $\rho = 0$, no cubic graph
could be a $\Pot$-local minimum with $\beta \ge 1$.

\begin{proposition}[Fractional barrier]\label{prop:barrier}
The relaxation above is feasible with $\rho > 0$. Hence no linear
discharging over edge-multiplicity profiles together with the counting
constraints can force $\rho = 0$.
\end{proposition}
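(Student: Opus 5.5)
The plan is to exhibit one explicit feasible point of the relaxation with $\rho>0$, built from a handful of edge types, and to check every constraint on it by hand. First I will write the relaxation out precisely. An edge type is a pair: a multiset $C$ of multiplicities from $\{1,2,3\}$ (the at-risk neighbours of the chosen endpoint) and a multiset $U$ of multiplicities from $\{0,1,2\}$ (those of the unchosen endpoint), each of size at most two. The variables are nonnegative densities $x_\tau$ over types, normalised to $\sum_\tau x_\tau = 1$. Only types satisfying $3p+q\ge 3a+b$ are admitted, where $p,q$ count the $1$s and $2$s in $C$ and $a,b$ count the $0$s and $1$s in $U$.

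Next I will make the counting constraints explicit. Write $C_k$ and $U_k$ for the total density of chosen-side and unchosen-side incidences of mult-$k$ vertices. A mult-$k$ at-risk vertex has exactly $k$ chosen and $3-k$ unchosen neighbours, each on a distinct $M$-edge. This gives the balance equations
\[
  C_0 = 0,\quad U_1 = 2C_1,\quad C_2 = 2U_2,\quad U_3 = 0,
\]
and the uncovered density is $\rho = U_0/3$ per $M$-edge.

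The witness uses three types, each with weight $\tfrac13$:
\begin{itemize}
\item[(A)] $C=\{1\}$, $U=\{0\}$. Here $3p+q = 3 \ge 3 = 3a+b$, so the type is tight.
\item[(B)] $C=\{2,2\}$, $U=\{1,1\}$. Here $3p+q = 2 \ge 2 = 3a+b$, also tight.
\item[(C)] $C=\emptyset$, $U=\{2\}$. Here $0\ge 0$, since a mult-$2$ neighbour of the unchosen endpoint contributes nothing to the right-hand side.
\end{itemize}
With weights $x,y,z$ on (A), (B), (C), the incidences are $C_1 = x$, $U_1 = 2y$, $C_2 = 2y$, $U_2 = z$ and $U_0 = x$. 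The balance equations then force $2y = 2x$ and $2y = 2z$. So $x=y=z=\tfrac13$ is feasible, and it gives $\rho = \tfrac19 > 0$.

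The second sentence of the proposition follows directly. Any linear discharging over edge-multiplicity profiles combined with the counting constraints is a valid inequality for this polytope. Such an inequality is satisfied by the point above, so it cannot imply $\rho = 0$.

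The step I expect to need the most care is not the arithmetic but stating the relaxation exactly as intended, so that the witness is honestly feasible. In particular, the counting constraints must be exactly the incidence balances for mult-$1$ and mult-$2$ vertices, plus the trivial ones for mult-$0$ and mult-$3$. If the intended LP also normalises by vertex counts, or imposes a global balance between $M$-edges and at-risk vertices, I would add that constraint and, if needed, add a filler type such as $C=\{3\}$ with compensating mult-$3$ incidences, re-solving the small linear system.
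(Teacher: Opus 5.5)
Your proof is correct and is essentially the paper's argument. You exhibit an explicit feasible point built from three tight types, verify the per-edge filter and the mult-$1$ and mult-$2$ incidence balances, and conclude that no valid linear inequality can force $\rho = 0$. The only difference is the witness: the paper uses $(\emptyset,\{2,2\})$, $(\{1,1\},\{0,0\})$, $(\{2,2\},\{1,1\})$ with densities $\tfrac14,\tfrac14,\tfrac12$ and gets $\rho=\tfrac16$, while your types $(\{1\},\{0\})$, $(\{2,2\},\{1,1\})$, $(\emptyset,\{2\})$ at $\tfrac13$ each give an equally valid point with $\rho=\tfrac19$ (types with fewer than two at-risk neighbours per side are allowed, as the paper's own $\emptyset$ side shows), so your closing hedge about extra constraints is unnecessary.
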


\begin{proof}
The witness places density $\tfrac14$ on the type with chosen multiset $\emptyset$
and unchosen $\{2,2\}$, $\tfrac14$ on chosen $\{1,1\}$ and unchosen $\{0,0\}$, and
$\tfrac12$ on chosen $\{2,2\}$ and unchosen $\{1,1\}$. Each
type is tight, namely $0=0$, $6=6$, and $2=2$, so every per-edge
inequality holds. Both counting constraints hold. Each incidence kind occurs in a
single type, so its density is the two neighbours of that kind per edge times
that type's density. For mult-$1$, the unchosen incidence
density $2\cdot\tfrac12 = 1$ is twice the chosen density $2\cdot\tfrac14 =
\tfrac12$; for mult-$2$, the chosen density $2\cdot\tfrac12 = 1$ is twice the
unchosen density $2\cdot\tfrac14 = \tfrac12$. The uncovered incidence
density is $2\cdot\tfrac14 = \tfrac12$, and each uncovered vertex carries three
unchosen incidences, so the uncovered-vertex density is $\rho = \tfrac16 > 0$.
Since every linear inequality valid for the relaxation evaluates non-negatively on
this feasible point, none can force $\rho = 0$.
\end{proof}

The barrier is not special to the cubic case. For general $\Delta$ the same
reduction grades an at-risk vertex by the weight $\Delta + 1$ when uncovered and
$1$ when covered once, giving the potential $\Pot_\Delta = (\Delta + 1)\beta +
P_1$ with $P_1$ the number of mult-$1$ vertices; at $\Delta = 3$ this is the
potential $\Pot$ of Section~\ref{sec:cubic}. The per-edge change, by the weight
bookkeeping of Proposition~\ref{prop:peredge}, is $(\Delta p + q) - (\Delta a +
b)$, with $p, q$ counting the chosen endpoint's at-risk neighbours of mult $1$
and $2$ and $a, b$ the unchosen endpoint's of mult $0$ and $1$, so the relaxation
carries over with filter $\Delta p + q \ge \Delta a + b$.
The counting constraint likewise generalises. A mult-$j$ at-risk vertex is chosen
on $j$ of its $\Delta$ edges and unchosen on $\Delta - j$, so its chosen and
unchosen incidences stand in ratio $j : (\Delta - j)$.

\begin{proposition}[Barrier at every regular degree]\label{prop:barrier-delta}
For every $\Delta \ge 3$ the relaxation with potential $\Pot_\Delta$ is feasible
with
\[
  \rho(\Delta) \;=\; \frac{2(\Delta - 1)}{\Delta\,(\Delta^{2} - \Delta + 2)}
  \;>\; 0 .
\]
Hence, at every regular degree, no linear discharging with this potential forces
$\rho = 0$.
\end{proposition}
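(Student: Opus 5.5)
The plan is to generalise the three-type witness of Proposition~\ref{prop:barrier} directly, keeping its architecture. Every type uses only multiplicities $0$, $1$ and $2$. Each type is tight in the filter $\Delta p + q \ge \Delta a + b$, and the densities are solved from the two counting constraints. Since an endpoint of an $M$-edge has at most $\Delta-1$ neighbours besides its partner, each side of a type carries at most $\Delta-1$ at-risk vertices. I would fill both sides to this maximum, mirroring the two-per-side pattern at $\Delta=3$.

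Concretely, I would take three types.
\begin{itemize}
\item Type $A$ has chosen multiset $\emptyset$ and unchosen multiset $\{2^{\Delta-1}\}$, with density $x_A$.
\item Type $B$ has chosen $\{1^{\Delta-1}\}$ and unchosen $\{0^{\Delta-1}\}$, with density $x_B$.
\item Type $C$ has chosen $\{2^{\Delta-1}\}$ and unchosen $\{1^{\Delta-1}\}$, with density $x_C$.
\end{itemize}
The first step is to check the filter on each type. In $A$ there are no mult-$0$ or mult-$1$ vertices on the unchosen side and nothing on the chosen side, so the filter reads $0=0$. In $B$ it reads $\Delta(\Delta-1)=\Delta(\Delta-1)$, since $p=a=\Delta-1$. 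In $C$ it reads $\Delta-1=\Delta-1$, since $q=b=\Delta-1$. All three are tight, exactly as in the cubic witness.

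The second step imposes the counting constraints, which require chosen and unchosen incidences of mult-$j$ vertices to stand in ratio $j:(\Delta-j)$. Each incidence kind occurs in a single type.
\begin{itemize}
\item For mult $1$, chosen incidences come only from $B$ and unchosen ones only from $C$. This gives $(\Delta-1)\cdot(\Delta-1)x_B = (\Delta-1)x_C$, hence $x_C=(\Delta-1)x_B$.
\item For mult $2$, chosen incidences come only from $C$ and unchosen ones only from $A$. This gives $(\Delta-2)\cdot(\Delta-1)x_C = 2(\Delta-1)x_A$, hence $x_A=(\Delta-1)(\Delta-2)x_B/2$.
\end{itemize}
Normalising $x_A+x_B+x_C=1$ then gives $x_B\bigl(1+(\Delta-1)+\tfrac{(\Delta-1)(\Delta-2)}{2}\bigr)=1$, that is, $x_B = 2/(\Delta^2-\Delta+2)$. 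At $\Delta=3$ this recovers $x_A=x_B=\tfrac14$ and $x_C=\tfrac12$, which is a useful consistency check.

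Finally, the uncovered incidence density is $(\Delta-1)x_B$, all of it coming from $B$. Each uncovered vertex carries $\Delta$ unchosen incidences, so
\[
\rho=\frac{(\Delta-1)x_B}{\Delta}=\frac{2(\Delta-1)}{\Delta(\Delta^2-\Delta+2)},
\]
which is positive for every $\Delta\ge 3$. The concluding sentence then follows exactly as in Proposition~\ref{prop:barrier}: any valid linear inequality is nonnegative on this feasible point.

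I expect the main obstacle to be definitional rather than computational. I must confirm that the relaxation's counting constraints are only the ratio constraints for the multiplicities that actually occur. In particular, mult $\ge 3$ and mult-$0$ vertices need no balancing, since mult-$0$ vertices have no chosen incidences. I also need $\Delta\ge3$ so that mult-$2$ vertices have $\Delta-2\ge1$ unchosen incidences, which makes the mult-$2$ ratio constraint meaningful and keeps $x_A>0$.
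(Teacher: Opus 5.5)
Your proposal is correct. Each of your three types is tight. The counting constraints reduce to the single-term equations $x_C=(\Delta-1)x_B$ and $x_A=\tfrac12(\Delta-1)(\Delta-2)x_B$, and normalising gives $x_B=2/(\Delta^2-\Delta+2)$ and exactly the stated $\rho(\Delta)$.

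The paper uses the same method, an explicit feasible point with three tight types, but a different point. Its types are:
\begin{itemize}
\item type $A$, chosen $\{2\}$ and unchosen $\{1,2^{\Delta-2}\}$;
\item type $B$, chosen $\{1,2\}$ and unchosen $\{0,1,2^{\Delta-3}\}$;
\item type $C$, chosen $\{2,2\}$ and unchosen $\{1,1,2^{\Delta-3}\}$.
\end{itemize}
It takes $x_B=2(\Delta-1)/(\Delta^2-\Delta+2)$, and only type $B$ carries an uncovered incidence, a single one, so $\rho=x_B/\Delta$. In that point the mult-$1$ and mult-$2$ unchosen incidences are spread over all three types. The counting constraints therefore become multi-term polynomial identities in $\Delta$, and the paper leaves their verification to its accompanying code. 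At $\Delta=3$ its point also degenerates ($x_C=0$) to a two-type witness different from that of Proposition~\ref{prop:barrier}.

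Your choice of filling both sides to $\Delta-1$ keeps each incidence kind inside a single type. Every constraint is then a one-line hand check, so your argument is fully self-contained where the paper's is not. It also specialises exactly to the cubic witness of Proposition~\ref{prop:barrier} at $\Delta=3$. The two feasible points give the same uncovered incidence density $2(\Delta-1)/(\Delta^2-\Delta+2)$, hence the same $\rho(\Delta)$, so your route loses nothing relative to the paper's.
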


\begin{proof}
Three edge types carry the witness, with $2^{j}$ denoting $j$ entries equal to
$2$. Type $A$ has chosen multiset $\{2\}$ and unchosen $\{1, 2^{\Delta - 2}\}$;
type $B$ has chosen $\{1, 2\}$ and unchosen $\{0, 1, 2^{\Delta - 3}\}$; type $C$
has chosen $\{2, 2\}$ and unchosen $\{1, 1, 2^{\Delta - 3}\}$. Each type is tight,
the identity $\Delta p + q = \Delta a + b$ reading $1 = 1$, $\Delta + 1 = \Delta +
1$, and $2 = 2$, so every per-edge inequality holds. Set
\[
  x_A = \frac{2}{\Delta - 1}\,x_B, \qquad
  x_B = \frac{2(\Delta - 1)}{\Delta^{2} - \Delta + 2}, \qquad
  x_C = \frac{\Delta(\Delta - 3)}{2(\Delta - 1)}\,x_B.
\]
These densities are non-negative, with $x_C = 0$ at $\Delta = 3$, and normalise
to $x_A + x_B + x_C = 1$. The counting constraints, non-trivial only at
multiplicities $1$ and $2$, now involve incidence densities summing over several
types, unlike the single terms of the cubic witness. A mult-$0$ vertex is an unchosen neighbour
on each of its $\Delta$ edges, and only type $B$ carries one, so the uncovered
incidence density is $x_B$ and $\rho(\Delta) = x_B / \Delta$, the stated value.
At $\Delta = 3$ it equals $\tfrac16$, recovering the cubic value of
Proposition~\ref{prop:barrier} through a different feasible point. The counting
constraints and the normalisation, identities in $\Delta$, are verified in the
accompanying code.
\end{proof}

For $\Delta \ge 7$ the conjecture holds by Theorem~\ref{thm:lll} regardless, so
the barrier bounds the discharging method, not the conjecture. The degrees
$\{3, 4, 5, 6\}$, beyond the reach of the Local Lemma, yield to neither approach.

Strengthening the linear relaxation to a degree-two
sum-of-squares~\cite{Lasserre2001,Parrilo2003} at the family level does no better
on the cubic instance. The
lift over pairs of edge types adjoins a moment matrix constrained to be positive
semidefinite, and on the witness above that matrix decomposes as a
non-negative combination of rank-one positive-semidefinite terms, one per pair of
types, so the constraint holds automatically, leaving the witness feasible. Only a
per-instance lift, whose moments encode the distance-two non-adjacency of the
at-risk vertices on specific edges, could carry positive-semidefinite content that
tightens the relaxation, and no such lift is a family-level argument. The
decomposition is verified for the cubic witness in the accompanying code; whether
it persists at every degree is not addressed here.

\section{A satisfiability reformulation}\label{sec:sat}

The transversal search of Section~\ref{sec:reduction} is, in the cubic case, an
instance of a restricted satisfiability problem at a classical threshold. Let $G$
be cubic and $M$ a maximal matching. Each edge $e \in M$ carries a
Boolean variable $x_e$ whose two truth values name the two endpoints of $e$, so a
truth assignment is a transversal. An at-risk vertex $u$ has neighbours on three
distinct $M$-edges $e_1, e_2, e_3$; let $\ell_i$ be the literal on $x_{e_i}$
that is true when the endpoint adjacent to $u$ is chosen. Then $u$ is covered
exactly when the clause $\ell_1 \vee \ell_2 \vee \ell_3$ is satisfied. Let
$\Phi(G,M)$ be the conjunction of these clauses over the at-risk vertices.

\begin{proposition}[Satisfiability reformulation]\label{prop:sat}
A maximal matching $M$ of a cubic graph $G$ admits a dominating transversal if
and only if $\Phi(G,M)$ is satisfiable. In particular, if $\Phi(G,M)$ is
satisfiable for a minimum maximal matching $M$, then $\gamma(G) \le \edom(G)$.
\end{proposition}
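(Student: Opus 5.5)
The plan is to exhibit an explicit bijection between transversals of $M$ and truth assignments to the variables $\{x_e : e \in M\}$. Under this bijection, a transversal covers a given at-risk vertex exactly when the corresponding assignment satisfies that vertex's clause. Once this is in place, both directions of the equivalence are immediate, and the ``in particular'' clause follows from Proposition~\ref{prop:reduction}.

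First, fix for each $e = \{y_e, y_e'\} \in M$ the convention that $x_e$ true means $y_e$ is chosen and $x_e$ false means $y_e'$ is chosen. An assignment $\sigma$ then determines the set $T_\sigma$ containing exactly one endpoint of each $M$-edge, which is a transversal by Definition~\ref{def:transversal}. Conversely, every transversal arises from exactly one assignment.

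Next, take an at-risk vertex $u$. Cubicity and Definition~\ref{def:atrisk} give neighbours $w_1, w_2, w_3$ lying on distinct edges $e_1, e_2, e_3$. The literal $\ell_i$ is $x_{e_i}$ if $w_i = y_{e_i}$, and $\neg x_{e_i}$ otherwise. This is well defined because $u$ has exactly one neighbour on $e_i$; it cannot have two, since its three neighbours lie on three distinct edges. Then $\ell_i$ is true under $\sigma$ if and only if $w_i \in T_\sigma$. Because every neighbour of $u$ is one of the $w_i$, $u$ is covered by $T_\sigma$ if and only if the clause $\ell_1 \vee \ell_2 \vee \ell_3$ is satisfied. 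The distinctness of the $e_i$ also ensures that the clause contains three literals on three distinct variables, so no clause is degenerate.

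Finally, I would invoke the remark after Definition~\ref{def:atrisk}: a transversal is dominating if and only if it covers every at-risk vertex. The reason is that an unsaturated vertex that is not at risk has two neighbours forming a single $M$-edge, so it is dominated automatically. Hence $T_\sigma$ is dominating if and only if $\sigma$ satisfies every clause of $\Phi(G,M)$, which proves the equivalence. If $M$ is a minimum maximal matching and $\Phi(G,M)$ is satisfiable, then $M$ admits a dominating transversal, and Proposition~\ref{prop:reduction} yields $\gamma(G) \le \edom(G)$.

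There is no serious obstacle here. The only point needing care is that the literals are well defined and that safe (non-at-risk) unsaturated vertices contribute no clause. Both facts rest on the at-risk definition and on the earlier observation that safe vertices are always dominated.
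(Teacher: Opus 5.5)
Your proposal is correct and follows the paper's proof: the paper likewise identifies truth assignments with transversals, notes that an at-risk vertex's clause is satisfied exactly when that vertex is covered, uses that a transversal is dominating exactly when it covers every at-risk vertex, and then invokes Proposition~\ref{prop:reduction} for a minimum maximal matching. You spell out two points the paper leaves implicit, namely that the literals are well defined and that non-at-risk unsaturated vertices are dominated automatically.
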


\begin{proof}
Under the naming, truth assignments are exactly transversals, and the clause of an
at-risk vertex is satisfied exactly when the transversal covers that vertex. A
transversal thus satisfies $\Phi$ exactly when it covers every at-risk vertex,
hence exactly when it is dominating, giving the equivalence. For a minimum
maximal matching, Proposition~\ref{prop:reduction} then gives the implication.
\end{proof}

\begin{lemma}[Structure of $\Phi$]\label{lem:sat}
Each clause of $\Phi(G,M)$ has three literals on distinct variables, and each
variable occurs at most twice positively and at most twice negatively, hence in
at most four clauses.
\end{lemma}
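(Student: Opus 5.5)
The plan is to read both claims directly off the construction of $\Phi(G,M)$ in Proposition~\ref{prop:sat}, using only cubicity and the definition of an at-risk vertex.

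\emph{Clauses.} For the first claim, take an at-risk vertex $u$. By Definition~\ref{def:atrisk} its three neighbours lie on three distinct $M$-edges $e_1,e_2,e_3$. Its clause uses the variables $x_{e_1},x_{e_2},x_{e_3}$, so the three literals sit on distinct variables. No further argument is needed here.

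\emph{Variable occurrences.} For the second claim, fix an $M$-edge $e=\{x,x'\}$, and fix the naming so that $x_e$ true means $x$ is chosen. The variable $x_e$ appears in the clause of $u$ exactly when $u$ is at risk and adjacent to $x$ or to $x'$. It appears positively precisely when $u\sim x$ and negatively precisely when $u\sim x'$.

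We then count. Since $G$ is cubic, $x$ has three neighbours, one of which is $x'$. The vertex $x'$ is saturated, hence not at risk, so at most two at-risk vertices are adjacent to $x$, and $x_e$ occurs positively at most twice. The symmetric count at $x'$ bounds the negative occurrences by two.

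The only point needing care is that no clause is counted on both sides, which would require a single $u$ adjacent to both $x$ and $x'$. This is excluded by Definition~\ref{def:atrisk}, exactly as in the proof of Proposition~\ref{prop:peredge}. So the positive and negative occurrences come from distinct clauses, and $x_e$ lies in at most $2+2=4$ clauses.

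There is no real obstacle here. The one subtlety is ruling out a vertex adjacent to both endpoints of $e$, which is what keeps the total count at four rather than allowing overlap between the two sides.
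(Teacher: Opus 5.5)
Your proof is correct and follows the paper's argument: the clause width comes from Definition~\ref{def:atrisk}, and each polarity of $x_e$ is bounded by the two non-partner neighbours of the corresponding endpoint, since the partner is saturated and so not at risk. One small correction to your commentary: ruling out a vertex adjacent to both endpoints is not what keeps the count at four, because such an overlap could only make the number of distinct clauses containing $x_e$ smaller than its number of occurrences; the check is also already implied by your first claim that the literals lie on distinct variables.
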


\begin{proof}
An at-risk vertex has three neighbours on three distinct $M$-edges
(Definition~\ref{def:atrisk}), so its clause has three literals on distinct
variables. The variable $x_e$ occurs for each at-risk neighbour of an endpoint of
$e$. An endpoint has three neighbours, its partner and two others, and the
partner is saturated, hence not at-risk, so at most two of them are at-risk. The
two endpoints of $e$ give the two polarities of $x_e$, so $x_e$ occurs at most
twice in each polarity.
\end{proof}

\begin{proposition}[Few at-risk vertices]\label{prop:fewatrisk}
A minimum maximal matching of a cubic graph $G$ on $n$ vertices leaves at most
$\lfloor 2n/5 \rfloor$ at-risk vertices. If it leaves at most nineteen, then
$\gamma(G) \le \edom(G)$; in particular $\gamma(G) \le \edom(G)$ for every cubic
graph on at most forty-eight vertices.
\end{proposition}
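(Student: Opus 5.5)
The plan has three parts: a counting bound on the at-risk vertices, a satisfiability statement for small formulas of the shape in Lemma~\ref{lem:sat}, and the numerical step to forty-eight vertices.

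\emph{First claim.} Let $M$ be a minimum maximal matching with unsaturated set $Z$. Every at-risk vertex lies in $Z$, so it suffices to bound $|Z|$. Proposition~\ref{prop:identity} at $\Delta = 3$ reads $10|M| = 3n + 2q$ with $q \ge 0$. Substituting into $|Z| = n - 2|M|$ gives
\[
  |Z| \;=\; n - \frac{3n + 2q}{5} \;=\; \frac{2n - 2q}{5} \;\le\; \frac{2n}{5}.
\]
Integrality then gives $|Z| \le \lfloor 2n/5 \rfloor$. The last statement of the proposition follows from the middle one, since for $n \le 48$ we have $2n/5 \le 19.2$ and hence at most nineteen at-risk vertices.

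\emph{Middle claim.} By Proposition~\ref{prop:sat}, it suffices to show that every formula $\Phi$ with at most nineteen clauses and the structure recorded in Lemma~\ref{lem:sat} is satisfiable. I would forget the graph entirely and prove the purely combinatorial statement: every 3-CNF in which each clause has three distinct variables and each variable occurs at most twice in each polarity is satisfiable when it has at most nineteen clauses. Suppose a counterexample exists, and pass to a minimally unsatisfiable subformula $F$ with $m \le 19$ clauses on $k$ variables. The standard reductions then constrain $F$:
\begin{itemize}
\item \emph{Pure literals.} No literal of $F$ is pure, since a pure literal could be set true and its clauses deleted, contradicting minimality. So every variable occurs in both polarities.
\item \emph{Tarsi's lemma.} A minimally unsatisfiable formula has $m \ge k+1$.
\item \emph{Occurrence count.} Each variable occurs at most four times, so $3m \le 4k$.
\item \emph{Once-each variables.} A variable occurring exactly once in each polarity can be eliminated by Davis--Putnam resolution. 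This replaces two clauses by one resolvent of width four. That breaks the width-three shape, so I would instead use the resolution only to bound the structure, or carry a mixed-width class through the search.
\end{itemize}
These counting constraints alone give only $m \ge 4$, far from twenty.

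\emph{Main obstacle.} The real content is raising the threshold from the trivial bounds to twenty clauses. The union bound clears only seven clauses, since each clause kills $1/8$ of the assignments. Tovey's theorem does not apply, because it allows only three occurrences per variable. I would try first to settle the threshold by an exhaustive, symmetry-reduced search over minimally unsatisfiable formulas of this class with at most nineteen clauses. The search would enumerate occurrence patterns in canonical form, using the constraints $k + 1 \le m \le 19$ and $3m \le 4k$ together with the absence of pure literals, and run a SAT solver on each candidate. This is presumably what the accompanying code does. In parallel, I would look for a cited minimum size of unsatisfiable balanced $(3,4)$-SAT instances, in the style of Berman--Karpinski--Scott, to support the number twenty. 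If neither the search nor a citation is available, this step is where the argument would stall. A weighted Local Lemma or a matching argument of Tovey type might recover a weaker constant, but I would not expect either to reach nineteen.
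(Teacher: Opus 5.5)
Your counting step matches the paper's. At $\Delta = 3$, Proposition~\ref{prop:identity} gives $|Z| = (2n-2q)/5 \le 2n/5$, and $n \le 48$ caps the at-risk count at nineteen. Your reduction of the middle claim is also exactly the paper's route: through Proposition~\ref{prop:sat} and Lemma~\ref{lem:sat}, you reduce it to the purely combinatorial statement that every $3$-CNF with at most two occurrences of each literal and at most nineteen clauses is satisfiable.

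The gap is that this statement is never established, and it carries the whole weight of the proposition. None of your tools reaches it:
\begin{itemize}
\item The constraints you derive (no pure literals, Tarsi's $m \ge k+1$, $3m \le 4k$) give only $m \ge 4$.
\item The first-moment bound clears only seven clauses.
\item Tovey's theorem is out of range, and Davis--Putnam elimination leaves the class.
\item The exhaustive search is only sketched.
\item The Berman--Karpinski--Scott construction shows that unsatisfiable balanced instances exist. That bounds the threshold from above, not from below as you need.
\end{itemize}
As written, your argument proves the proposition with nineteen replaced by seven, hence only for cubic graphs on at most eighteen vertices.

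The missing ingredient is the exact value $\mu(3,2,2) = 20$. A smallest unsatisfiable $3$-CNF in which every variable occurs at most twice positively and at most twice negatively has twenty clauses; this is a computer-assisted result of Zhang, Peitl and Szeider~\cite{ZhangPeitlSzeider2024}. The paper does not derive this but cites it. Then $\Phi(G,M)$, a $(3,2,2)$-formula with $r \le 19$ clauses, is satisfiable. This yields a dominating transversal of the minimum maximal matching, and $\gamma(G) \le \edom(G)$ follows by Proposition~\ref{prop:reduction}.

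The bound you cite must hold over the whole class with at most two occurrences per literal. A bound only over balanced instances, where each literal occurs exactly twice, is not enough, since Lemma~\ref{lem:sat} does not make $\Phi(G,M)$ balanced. With that citation your outline becomes the paper's proof.
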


\begin{proof}
Let $M$ be a minimum maximal matching of $G$ and $Z$ the set of vertices it leaves
unsaturated, an independent set by maximality. Proposition~\ref{prop:identity}
gives $|M| \ge \tfrac{3}{10}\,n$, so $|Z| = n - 2|M| \le \tfrac{2}{5}\,n$.
The at-risk vertices lie in $Z$, so their number $r$ is at most
$\lfloor 2n/5 \rfloor$, which is at most nineteen for $n \le 48$.

Suppose $r \le 19$. By Proposition~\ref{prop:sat} it suffices that $\Phi(G,M)$ is
satisfiable. By Lemma~\ref{lem:sat} each clause has three literals on distinct
variables and each variable occurs at most twice in either polarity, so
$\Phi(G,M)$ is a $(3,2,2)$-formula, a $3$-CNF in which every variable has at
most two positive and two negative occurrences. A smallest unsatisfiable such
formula has twenty clauses~\cite{ZhangPeitlSzeider2024}, so every one with at
most nineteen clauses is satisfiable. As $\Phi(G,M)$ has $r \le 19$ clauses it is
satisfiable, and Proposition~\ref{prop:sat} gives $\gamma(G) \le \edom(G)$.
\end{proof}

\noindent
Following~\cite{Tovey1984}, write $(k,s)$-SAT for the satisfiability
instances with exactly $k$ variables in each clause and at most $s$ occurrences
of each variable. By Lemma~\ref{lem:sat}, $\Phi(G,M)$ lies in $(3,4)$-SAT, at the
satisfiability boundary. Every $(3,3)$-SAT instance is satisfiable, whereas
$(3,4)$-SAT is NP-complete. Neither the occurrence bound nor the polarity balance
forces satisfiability. Even the tight case of Lemma~\ref{lem:sat}, in which each
literal occurs exactly twice, admits unsatisfiable instances~\cite{Berman2003}.
The smallest such instance has twenty clauses~\cite{ZhangPeitlSzeider2024}, which
is exactly what Proposition~\ref{prop:fewatrisk} exploits. The occurrence
structure alone settles every cubic graph on at most forty-eight vertices. A
weaker but self-contained threshold needs no exhaustive input, since a first
moment bound already clears $r \le 7$. Under a uniform random assignment each
clause fails with probability $2^{-3}$, so the expected number of failures is
$r/8 < 1$ and some assignment satisfies $\Phi$. The graphs left open are those
whose minimum maximal matching leaves twenty or more at-risk vertices, the first
at $n = 50$. At that order $\Phi$ can be unsatisfiable, as the following
construction shows.

\begin{theorem}[Integral obstruction]\label{thm:cex}
There is a connected cubic graph $G$ on fifty vertices whose unique minimum
maximal matching admits no dominating transversal, yet whose domination and edge
domination numbers satisfy $\gamma(G) = 14 < 15 = \edom(G)$.
\end{theorem}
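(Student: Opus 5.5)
The theorem is existential, so I will prove it by an explicit construction. Since the graph has fifty vertices, $\lfloor 2n/5\rfloor = 20$ and Proposition~\ref{prop:fewatrisk} allows exactly twenty at-risk vertices. I start from a smallest unsatisfiable $(3,2,2)$-formula $F$ with twenty clauses from~\cite{ZhangPeitlSzeider2024}; counting occurrences, it has $60$ literal occurrences, so fifteen variables each occurring exactly twice positively and twice negatively. I realise $F$ as $\Phi(G,M)$. For each variable $x_e$ I take an $M$-edge $e=\{v_e^+,v_e^-\}$, giving thirty saturated vertices. For each clause I take an unsaturated vertex $u$ joined to the endpoint $v_e^{\pm}$ named by each of its three literals. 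Because every literal occurs exactly twice, each endpoint receives exactly two clause-neighbours, so each has degree $1+2=3$. The graph is then cubic on $30+20=50$ vertices, $M$ is perfect on $V(M)$ with $|M|=15$, and $Z$ is independent, so $M$ is maximal.

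Next I need $M$ to be a minimum maximal matching. There are no edges inside $V(M)$ other than $M$ itself, so $q=0$ and Proposition~\ref{prop:identity} gives $\edom(G)=3\cdot 50/10=15=|M|$. All twenty unsaturated vertices are at risk, because their three literals lie on distinct variables (I must choose $F$ so that no clause repeats a variable, which holds by the definition of the formula class). Hence $\Phi(G,M)=F$, and by Proposition~\ref{prop:sat} this $M$ has no dominating transversal.

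\emph{Uniqueness.} By Proposition~\ref{prop:identity}, any other minimum maximal matching $M'$ must also have $q=0$, so it is an induced maximal matching of size $15$ whose unsaturated set has twenty vertices. I would prove $M'=M$ by a structural argument, probably with computer assistance as the paper does elsewhere. Suppose $M'$ uses an edge $u v_e^{\pm}$. The graph is bipartite-like between $Z$ and $V(M)$ apart from the $M$-edges, and the fact that $M'$ is induced together with the degree counts should force a contradiction. Failing a clean argument, I would verify by exhaustive search that $M$ is the unique induced perfect-on-complement matching. Connectivity of $G$ follows from the incidence structure of $F$; the graph splits only if $F$ splits into variable-disjoint parts, and a minimal unsatisfiable formula cannot do that.

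Finally I need $\gamma(G)=14$. For the upper bound I exhibit an explicit dominating set of size $14$, found by an ILP or solver and then checked by listing it. It must use unsaturated vertices, since no transversal works. For the lower bound, cubic domination gives $\gamma\ge n/4=12.5$, so only $\gamma\ge 13$ follows, and excluding $13$ needs an exhaustive or ILP certificate. This equality $\gamma=14$ and the uniqueness of $M$ are the computational core and the main obstacle. I would present them as certified solver results in the accompanying code, and give the short hand argument only for the structural parts above.
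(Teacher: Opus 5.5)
Your construction and verification plan are the paper's own proof. You realise a twenty-clause unsatisfiable formula as $\Phi(G,M)$, with one $M$-edge per variable and one unsaturated vertex per clause. You obtain minimality of $M$ from $q=0$ in Proposition~\ref{prop:identity} and connectivity from minimality of the formula. You then certify uniqueness of $M$ and $\gamma(G)=14$ by an explicit $14$-set plus exhaustive search, as Appendix~\ref{app:cert} does.

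One step, however, is wrong as written. Sixty literal occurrences with at most four per variable give \emph{at least} fifteen variables, not exactly fifteen with every literal occurring twice. A smallest unsatisfiable $(3,2,2)$-formula could a priori have more variables with some literal occurring once. The corresponding polarity vertex would then have degree $2$, and $G$ would not be cubic. You must instead take $F$ from the class in which every literal occurs exactly twice. The paper does this, citing~\cite{Berman2003}, and lists an explicit twenty-clause member in Appendix~\ref{app:cert}. Its unsatisfiability can be checked by hand: the four blocks force $x_1\vee x_2$, $x_1\vee\bar x_2$, $\bar x_1\vee x_3$ and $\bar x_1\vee\bar x_3$ respectively.

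Relatedly, $\gamma(G)=14$ and the uniqueness of $M$ are properties of the particular formula, not of the class. Fix the formula before running the exhaustive checks rather than taking an arbitrary smallest one; the appendix formula works and comes with an explicit dominating set of size $14$. Do not rely on the sketched structural argument for uniqueness, since it is not established. The paper gives no such argument and certifies uniqueness by exhaustive search. With these repairs your proof coincides with the paper's.
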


\begin{proof}
Take a minimum unsatisfiable $3$-CNF $f$ in which each literal occurs exactly
twice~\cite{Berman2003}; the least such formula has $\mu(3,2,2) = 20$
clauses~\cite{ZhangPeitlSzeider2024}, here on fifteen variables. Build $G$ from the incidence of $f$. The matching $M$ has one edge per variable, joining that variable's two polarity
vertices, and each clause, whose three literals lie on distinct variables, gives
a further vertex joined to the endpoint named by each of them. An endpoint has
degree $1 + 2 = 3$, its $M$-edge
and its two occurrences, and a clause vertex has degree three, so $G$ is cubic
and simple. It is connected, since a disconnected incidence would split $f$ into two
smaller formulas of the same class, one unsatisfiable, contradicting the
minimality of $f$. The twenty clause vertices are unsaturated and pairwise
non-adjacent, so $M$, the fifteen variable edges, is maximal; each polarity vertex
meets only its partner and clause vertices, so $M$ is induced and
Proposition~\ref{prop:identity} makes $|M| = 15 = \tfrac{3}{10}\,n$ minimum. By construction $\Phi(G,M) = f$, which is
unsatisfiable, so no transversal of $M$ dominates the unsaturated set; equivalently
every transversal has $\beta \ge 1$. As $\Pot$ is a non-negative integer, greedy
descent halts at a $\Pot$-local minimum, necessarily with $\beta \ge 1$. That $M$
is the unique minimum maximal matching and that $\gamma(G) = 14$ are certified in
Appendix~\ref{app:cert}.
\end{proof}

\noindent
The feasibility of the relaxation in Proposition~\ref{prop:barrier} showed that no
linear argument over it closes the cubic case; Theorem~\ref{thm:cex} shows more. For an
actual cubic graph, $\Pot$ has a local minimum with $\beta \ge 1$, so even the
exact local-search descent, not only its relaxation, fails. Its at-risk vertices
are pairwise non-adjacent, the distance-two non-adjacency that
Section~\ref{sec:barrier} found beyond any family-level relaxation, so even that
condition does not close the case. Since
$M$ is the only minimum maximal matching and has no dominating transversal, the
reduction of Section~\ref{sec:reduction} cannot prove $\gamma(G) \le \edom(G)$
here; the dominating set of size $14$ is not a transversal of $M$. Every proof of
the cubic case must reason beyond transversals of minimum maximal matchings.

\section{Sharpness and the open range}\label{sec:sharp}

At this same order the inequality is also tight, and it stays tight on an infinite
family of cubic graphs, the generalized Petersen graphs. The graph
$GP(k,2)$ has vertices $u_0, \dots, u_{k-1}$ and $v_0, \dots, v_{k-1}$, with
edges $u_i u_{i+1}$, $u_i v_i$ and $v_i v_{i+2}$, indices modulo $k$; it is cubic
for $k \ge 5$.

\begin{proposition}[Sharpness]\label{prop:sharp}
Every $GP(k,2)$ with $k \ge 5$ contains an induced claw. For every $k$ divisible
by $5$,
\[
  \gamma(GP(k,2)) \;=\; \edom(GP(k,2)) \;=\; \tfrac{3}{5}k.
\]
\end{proposition}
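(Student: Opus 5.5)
The plan has three parts: the induced claw, an explicit periodic construction giving both upper bounds, and a lower bound on $\gamma$.

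\emph{Induced claw.} Take the centre $u_i$ with neighbours $u_{i-1}$, $u_{i+1}$ and $v_i$. The only neighbours of $u_j$ are $u_{j\pm1}$ and $v_j$. Hence $v_i$ is adjacent to neither $u_{i\pm1}$. Also $u_{i-1}u_{i+1}$ is an edge only if $i-1$ and $i+1$ are consecutive modulo $k$, which for $i-1\ne i+1$ means $k=3$ and is excluded by $k\ge5$. So $\{u_i;u_{i-1},u_{i+1},v_i\}$ is an induced claw.

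\emph{Upper bounds via an induced matching.} Let $5\mid k$. Since $5\mid k$, the index shifts by $1$ and $2$ are compatible with residues modulo $5$. I can therefore work on blocks $\{u_{5t+j},v_{5t+j}: 0\le j\le 4\}$ and define everything periodically. In each block I take the $M$-edges
\[
  u_{5t}u_{5t+1},\qquad v_{5t+2}v_{5t+4},\qquad u_{5t+3}v_{5t+3},
\]
so $|M|=3k/5$. Suppressing $t$ and reading indices modulo $5$, the unsaturated vertices are $u_2,u_4,v_0,v_1$. Their neighbourhoods are
\[
  N(u_2)=\{u_1,u_3,v_2\},\quad N(u_4)=\{u_3,u_0,v_4\},\quad N(v_0)=\{u_0,v_2,v_3\},\quad N(v_1)=\{u_1,v_3,v_4\}.
\]
All of these vertices are saturated, so the unsaturated set is independent and $M$ is maximal.

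A similar neighbourhood check shows that every non-$M$ edge at a saturated vertex goes to an unsaturated vertex. Thus $M$ is induced, $q=0$, and Proposition~\ref{prop:identity} gives $\edom=\tfrac{3}{10}\cdot 2k=3k/5$.

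For $\gamma$, I choose the transversal $T=\{u_1,v_2,u_3\}$ in every block. It covers the unsaturated vertices as follows: $u_2$ through $u_1$ (or $u_3$), $u_4$ through $u_3$, $v_0$ through $v_2$, and $v_1$ through $u_1$. So $T$ is dominating, and Proposition~\ref{prop:reduction} gives $\gamma\le 3k/5$.

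\emph{Lower bound $\gamma\ge 3k/5$.} This is the main obstacle, because closed neighbourhoods have size $4$, and naive counting only gives $\gamma\ge 2k/4=k/2$. I would first cite the known formula $\gamma(GP(k,2))=\lceil 3k/5\rceil$ from the literature on generalized Petersen graphs (Ebrahimi, Jahanbakht and Mahmoodian).

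For a self-contained argument I would use discharging. Let $D$ be a dominating set, and give each vertex charge $1$ for each vertex of $D$ in its closed neighbourhood, so the total charge is $4|D|$. Then I would show that every window of five consecutive indices, ten vertices, receives excess domination at least $2$, i.e.\ total charge at least $12$, giving $4|D|\ge 12k/5$. This needs a case analysis on how $D$ meets a window, using that the outer $k$-cycle and the inner $v$-cycles with step $2$ cannot be efficiently dominated simultaneously. Should the case analysis grow unwieldy, I would instead verify the per-window inequality by computer on the finitely many local configurations.
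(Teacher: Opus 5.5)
Your proof is correct along its main line and is essentially the paper's. The claw at $u_i$ is the same. The matching $u_{5t}u_{5t+1}$, $u_{5t+3}v_{5t+3}$, $v_{5t+2}v_{5t+4}$ is the same, set against the counting floor of Proposition~\ref{prop:identity}. The lower bound $\gamma\ge 3k/5$ is taken from the literature in both: the paper cites Fu, Yang and Jiang's formula $k-\lfloor k/5\rfloor-\lfloor (k+2)/5\rfloor$, which equals $\lceil 3k/5\rceil$ for every $k$. The one real difference is that you get $\gamma\le 3k/5$ from the explicit dominating transversal $\{u_{5t+1},v_{5t+2},u_{5t+3}\}$ via Proposition~\ref{prop:reduction}, rather than from the citation. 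That step checks out, is self-contained, and also shows that the reduction itself succeeds on this family.

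The self-contained fallback you sketch for the lower bound would not work, because its per-window inequality is false. Take $GP(k,2)$ with $5\mid k$ and $k\ge 10$, and the window of indices $0,\dots,4$. The union of the window's closed neighbourhoods is $\{u_{-1},\dots,u_5\}\cup\{v_{-2},\dots,v_6\}$. Let $D$ consist of $u_{-1},u_2,u_5,v_{-2},v_{-1},v_5,v_6$ together with every vertex outside that union. Then $D$ is dominating, and each of the ten window vertices has exactly one element of $D$ in its closed neighbourhood:
\begin{itemize}
\item $u_0$ through $u_{-1}$, and $u_4$ through $u_5$;
\item $u_1,u_2,u_3,v_2$ through $u_2$;
\item $v_0,v_1,v_3,v_4$ through $v_{-2},v_{-1},v_5,v_6$ respectively.
\end{itemize}
The window's charge is therefore $10$, not at least $12$. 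Locally the outer and inner cycles can be dominated perfectly, and the surplus is pushed into neighbouring windows. A computer check of your per-window inequality would simply fail. An elementary proof of $\gamma\ge 3k/5$ therefore needs a genuine redistribution of charge across windows, or the case analysis of the cited papers, rather than a bound window by window.
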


\begin{proof}
The neighbours of $u_i$ are $u_{i-1}$, $u_{i+1}$ and $v_i$. For $k \ge 5$ the
outer vertices $u_{i-1}$ and $u_{i+1}$ are non-adjacent, and $v_i$ meets only
$v_{i \pm 2}$ and $u_i$, so the three are pairwise non-adjacent and, with their
common neighbour $u_i$, induce a claw.

Let $k = 5m$. The domination number of $GP(k,2)$ is
$k - \lfloor k/5 \rfloor - \lfloor (k+2)/5
\rfloor$~\cite{FuYangJiang2009}, which equals $3k/5$ when $5$ divides $k$. On
$n = 2k$ vertices the counting bound $\edom \ge \Delta n/(4\Delta -
2)$ of Proposition~\ref{prop:identity} reads $\edom \ge 3k/5$. For the reverse inequality take,
for $j = 0, \dots, m-1$, the three edges
\[
  u_{5j}u_{5j+1}, \qquad u_{5j+3}v_{5j+3}, \qquad v_{5j+2}v_{5j+4} .
\]
No two of the resulting $3m$ edges share a vertex, and the unsaturated vertices
are $u_{5j+2}$, $u_{5j+4}$, $v_{5j}$ and $v_{5j+1}$. Every neighbour of each is
saturated, as $u_{5j+2}$ meets $u_{5j+1}$, $u_{5j+3}$ and $v_{5j+2}$; $u_{5j+4}$
meets $u_{5j+3}$, $u_{5j+5}$ and $v_{5j+4}$; $v_{5j}$ meets $u_{5j}$, $v_{5j+2}$
and $v_{5j-2}$; and $v_{5j+1}$ meets $u_{5j+1}$, $v_{5j+3}$ and $v_{5j-1}$. No
edge joins two unsaturated vertices, so this matching is maximal and $\edom \le
3m = 3k/5$, meeting the lower bound. With the domination number also $3k/5$ this
gives $\gamma = \edom = 3k/5$.
\end{proof}

\begin{remark}
A computer check finds $\gamma = \edom = \lceil 3k/5 \rceil$ for every $5 \le k \le 40$
with $k \not\equiv 3 \pmod 5$, and strict inequality at the remaining $k$ in that range.
\end{remark}

\noindent
Equality thus holds on the infinite family and at every verified $k \not\equiv 3
\pmod 5$. The family gives $GP(25,2)$ on $n = 50$ vertices and the finite check
$GP(26,2)$ on $n = 52$, both beyond the reach of Proposition~\ref{prop:fewatrisk}. Two
consequences bear on the cubic case. First, the graphs above contain claws, so
they lie outside the claw-free class whose equality cases are characterised
in~\cite{PanPanTie2025}. Second, every proof must be exactly tight on infinitely
many graphs, so no argument carrying slack can close the case. Sharpness is not
confined to $\Delta = 3$. Among the circulant graphs examined equality also
occurs at $\Delta = 4$, always with $\gamma = \edom = n/3$, above the counting
floor $2n/7$ there, while none was found at $\Delta = 5$ or $\Delta = 6$.

\medskip\noindent
The floor $\edom \ge \Delta n/(4\Delta-2)$ also locates what remains of the open
range. At sufficiently large girth a $\Delta$-regular graph has an independent
dominating set of size at most $c_\Delta n$, where $c_3 = 0.27942$, $c_4 =
0.24399$, $c_5 = 0.21852$ and $c_6 = 0.19895$~\cite{HoppenWormald2018}; at
$\Delta = 3$ this improves the $0.299871\,n$ of~\cite{KralSkodaVolec2012}. The
constants were obtained for random regular graphs
in~\cite{DuckworthWormald2002,DuckworthWormald2006} and carried to large
girth in~\cite{HoppenWormald2018}. Since such a set is dominating, the domination
number obeys the same bound.
Each constant lies below the threshold $\Delta/(4\Delta-2)$ that the combination
of Section~\ref{sec:intro} requires, by $0.021$, $0.042$, $0.059$ and $0.074$
respectively, so at every open degree the independent-domination ceiling drops
below the floor with room to spare and the conjecture holds for the graphs
concerned. The crossing gives more than
Conjecture~\ref{conj:baste}. The bound is on the independent domination number
itself, so
\[
  \ind(G) \;\le\; c_\Delta n \;<\; \frac{\Delta}{4\Delta-2}\,n \;\le\; \edom(G)
  \qquad (3 \le \Delta \le 6).
\]
Hence a $\Delta$-regular graph of sufficiently large girth satisfies the stronger
inequality of~\cite{Davila2025reverie} as well, which is open for every $\Delta \ge
3$. With Theorem~\ref{thm:lll} settling $\Delta \ge 7$, what is left of
Conjecture~\ref{conj:baste} is small girth alone. Claw-free and fork-free graphs
are already settled~\cite{Civan2023,ManiyaPradhan2024}, so the residue contains an
induced fork. No explicit girth is available at which
the crossing starts, the bounds of~\cite{HoppenWormald2018} being stated for
sufficiently large girth throughout,
so what the crossing gives is a confinement
of the open part to small girth, not a family one could
exhibit.

\section{The reformulation at every degree}\label{sec:degrees}

The reformulation is not special to cubic graphs. For a $\Delta$-regular graph
the clause of an at-risk vertex has $\Delta$ literals, one for each of its
$\Delta$ neighbours, which lie on distinct $M$-edges, and the argument of
Proposition~\ref{prop:sat} is degree-independent. A minimum maximal matching admits a dominating transversal
exactly when the resulting $\Delta$-uniform formula is satisfiable. A first-moment bound then
settles every degree on a bounded number of vertices.

\begin{proposition}[Bounded orders in every degree]\label{prop:bddorder}
Let $G$ be a $\Delta$-regular graph on $n$ vertices. A minimum maximal matching
leaves at most $\lfloor (\Delta-1)n/(2\Delta-1) \rfloor$ at-risk vertices, and if
it leaves at most $2^\Delta - 1$, then $\gamma(G) \le \edom(G)$. In
particular $\gamma(G) \le \edom(G)$ for every $\Delta$-regular graph on at most
$37$, $70$, and $140$ vertices when $\Delta = 4$, $5$, and $6$.
\end{proposition}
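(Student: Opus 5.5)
The plan mirrors the proof of Proposition~\ref{prop:fewatrisk}, with the exhaustive minimum-unsatisfiable count replaced by a first-moment bound.

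\emph{Counting at-risk vertices.} Let $M$ be a minimum maximal matching with unsaturated set $Z$. Proposition~\ref{prop:identity} gives $|M| \ge \Delta n/(4\Delta-2)$. Hence
\[
  |Z| = n - 2|M| \le n - \frac{2\Delta n}{4\Delta-2} = \frac{(2\Delta-2)n}{4\Delta-2} = \frac{(\Delta-1)n}{2\Delta-1}.
\]
At-risk vertices lie in $Z$, and their number is an integer, so it is at most $\lfloor(\Delta-1)n/(2\Delta-1)\rfloor$.

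\emph{First moment.} Take a uniformly random transversal of $M$, as in the setup of Theorem~\ref{thm:lll}. Each at-risk vertex has its $\Delta$ neighbours on $\Delta$ distinct $M$-edges, so it is uncovered with probability exactly $2^{-\Delta}$. With $r \le 2^\Delta - 1$ at-risk vertices, the expected number of uncovered ones is $r2^{-\Delta} < 1$. Some transversal therefore covers every at-risk vertex. Only at-risk vertices can be undominated (the remark after Definition~\ref{def:atrisk}), so this transversal is dominating, and Proposition~\ref{prop:reduction} gives $\gamma(G) \le \edom(G)$. Equivalently, the $\Delta$-uniform formula of this section has fewer than $2^\Delta$ clauses and is satisfiable.

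\emph{Numerical thresholds.} It remains to find the largest $n$ with $\lfloor(\Delta-1)n/(2\Delta-1)\rfloor \le 2^\Delta-1$.
\begin{itemize}
\item For $\Delta=4$ the bound is $\lfloor 3n/7\rfloor \le 15$. At $n=37$ it gives $\lfloor 111/7\rfloor = 15$, while $n=38$ gives $16$.
\item For $\Delta=5$ the bound is $\lfloor 4n/9\rfloor \le 31$. At $n=70$ it gives $\lfloor 280/9\rfloor = 31$, while $n=71$ gives $31$ as well, since $284/9 \approx 31.6$. The stated $70$ is therefore valid but perhaps not maximal. Regularity forces $n\Delta$ to be even, so $n$ is even at odd $\Delta$ and only even orders matter here; $n=72$ gives $32$. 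Thus $70$ is the largest relevant order.
\item For $\Delta=6$ the bound is $\lfloor 5n/11\rfloor \le 63$. At $n=140$ it gives $\lfloor 700/11\rfloor = 63$, and $n=141$ gives $\lfloor 705/11\rfloor = 64$.
\end{itemize}

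No step is a real obstacle. The only care needed is the floor arithmetic and remembering that the bound is on at-risk vertices, not on $|Z|$, which is harmless since at-risk vertices lie in $Z$.
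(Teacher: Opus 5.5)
Your proof is correct and follows essentially the same route as the paper's. Both use the counting identity to bound the unsaturated set by $(\Delta-1)n/(2\Delta-1)$, then a first-moment bound over a uniform random transversal (equivalently, a random assignment to $\Phi(G,M)$) once $r \le 2^\Delta - 1$, and both use the same floor arithmetic, including the observation that $n = 71$ also clears $\Delta = 5$ but only even orders occur there.
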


\begin{proof}
Let $M$ be a minimum maximal matching and $Z$ its unsaturated set, independent by
maximality. Proposition~\ref{prop:identity} gives $|M| \ge \Delta n/(4\Delta-2)$,
so $|Z| = n - 2|M| \le n(\Delta-1)/(2\Delta-1)$ and the number $r$ of at-risk
vertices, which lie
in $Z$, is at most $\lfloor (\Delta-1)n/(2\Delta-1) \rfloor$. The
$\Delta$-uniform formula $\Phi(G,M)$ has $r$ clauses; under a uniform random
assignment each is falsified with probability $2^{-\Delta}$, so the expected
number falsified is $r\,2^{-\Delta} < 1$ once $r \le 2^\Delta - 1$, and some
assignment satisfies all of them. The reformulation and
Proposition~\ref{prop:reduction} then give $\gamma(G) \le \edom(G)$. For $\Delta = 4, 5, 6$ the threshold $2^\Delta - 1$ is $15, 31, 63$, not exceeded
by $\lfloor (\Delta-1)n/(2\Delta-1) \rfloor$ for $n$ at most $37, 71, 140$; a
$5$-regular graph has even order, so $\Delta = 5$ covers $n \le 70$.
\end{proof}

\noindent
Each variable is an $M$-edge with two polarities, one per endpoint, and the
literal for an endpoint occurs once for each at-risk neighbour it covers. An
endpoint has $\Delta - 1$ neighbours besides its saturated partner, so at most
$\Delta - 1$ are at-risk, each polarity occurs at most $\Delta - 1$ times, and
$\Phi(G,M)$ is a $(\Delta, \Delta-1, \Delta-1)$-formula. As with $\mu(3,2,2) = 20$
in the cubic case, the minimum size of an unsatisfiable one would sharpen these
ranges, and a
$\Delta$-regular graph realising such a formula would, as in
Theorem~\ref{thm:cex}, give a minimum maximal matching with no dominating
transversal; whether such a formula exists for $\Delta \in \{4, 5, 6\}$ is open. At $\Delta
= 4$ and $\Delta = 5$ the occurrence count alone cannot settle the question. A
$(4,3,3)$-formula allows a variable six occurrences and a $(5,4,4)$-formula
eight, while unsatisfiable $(4,5)$-SAT and $(5,8)$-SAT instances
exist~\cite{HoorySzeider2005}. At
$\Delta = 6$ the count decides nothing either way. A $(6,5,5)$-formula allows ten
occurrences, and the largest $s$ for which every $(6,s)$-SAT instance is
satisfiable is known only to lie between seven and
eleven~\cite{HoorySzeider2005}. Only the
uniform clause width and the counting bound are used above.

\medskip\noindent
At the next three degrees the same occurrence count becomes decisive. A $(7,6,6)$-formula allows a
variable twelve occurrences, an $(8,7,7)$-formula fourteen and a $(9,8,8)$-formula
sixteen, whereas every $(7,13)$-, $(8,24)$- and $(9,41)$-SAT instance is
satisfiable~\cite{HoorySzeider2005}. Every formula $\Phi(G,M)$ arising at those
three degrees is therefore satisfiable, so Conjecture~\ref{conj:baste} holds
there by the reduction of Section~\ref{sec:reduction} together with those
bounds. The proof of Theorem~\ref{thm:lll} uses no such bound and settles
every $\Delta \ge 7$ in a single argument.

\section{Conclusion}\label{sec:open}

The reduction to a dominating transversal proves Conjecture~\ref{conj:baste} for
every $\Delta \ge 7$ through the Lov\'asz Local Lemma in a single argument, with
published occurrence thresholds giving an independent route at $\Delta \in
\{7,8,9\}$, leaving the degrees $\{3,4,5,6\}$. The middle degrees $\{4,5,6\}$, too small
for the Local Lemma and too large for the cubic case, remain open in general,
though Proposition~\ref{prop:bddorder} settles each on a bounded number of
vertices.

The cubic case
$\Delta = 3$, where the bound is sharp, is carried furthest. The reduction to a
dominating transversal of a minimum maximal matching, recast as satisfiability of
a balanced formula, settles every cubic graph on at most forty-eight vertices
through the least unsatisfiable size $\mu(3,2,2) = 20$
(Proposition~\ref{prop:fewatrisk}). At $n = 50$ the method meets its limit, and
two obstructions remain. The first is that the relaxation of
Section~\ref{sec:barrier} is feasible with uncovered-vertex density $\rho > 0$,
and Theorem~\ref{thm:cex} strengthens
this to the integers with a cubic graph whose unique minimum maximal matching has
no dominating transversal, so the transversal reduction and its local search both
fail. Proposition~\ref{prop:sharp} gives the second, the inequality tight on an
infinite family of cubic graphs containing claws, so no argument carrying slack
can close it. Conjecture~\ref{conj:baste} for cubic graphs remains open, and every
proof must be exactly tight on that family while reasoning beyond transversals of
minimum maximal matchings.

\appendix

\section{Certificate for the integral obstruction}\label{app:cert}

The graph of Theorem~\ref{thm:cex} is built from an explicit formula. Take $f$ to
be the twenty-clause $3$-CNF on the fifteen variables $x_1, \dots, x_{15}$ whose
clauses, in four blocks of five, are
\[
\begin{array}{@{}l@{\qquad}l@{}}
x_1 \vee x_4 \vee \bar{x}_6 & x_1 \vee x_7 \vee \bar{x}_9 \\
x_2 \vee \bar{x}_4 \vee x_5 & \bar{x}_2 \vee \bar{x}_7 \vee x_8 \\
x_2 \vee \bar{x}_5 \vee x_6 & \bar{x}_2 \vee \bar{x}_8 \vee x_9 \\
x_4 \vee x_5 \vee x_6 & x_7 \vee x_8 \vee x_9 \\
\bar{x}_4 \vee \bar{x}_5 \vee \bar{x}_6 & \bar{x}_7 \vee \bar{x}_8 \vee \bar{x}_9
\end{array}
\]
\[
\begin{array}{@{}l@{\qquad}l@{}}
\bar{x}_1 \vee x_{10} \vee \bar{x}_{12} & \bar{x}_1 \vee x_{13} \vee \bar{x}_{15} \\
x_3 \vee \bar{x}_{10} \vee x_{11} & \bar{x}_3 \vee \bar{x}_{13} \vee x_{14} \\
x_3 \vee \bar{x}_{11} \vee x_{12} & \bar{x}_3 \vee \bar{x}_{14} \vee x_{15} \\
x_{10} \vee x_{11} \vee x_{12} & x_{13} \vee x_{14} \vee x_{15} \\
\bar{x}_{10} \vee \bar{x}_{11} \vee \bar{x}_{12} & \bar{x}_{13} \vee \bar{x}_{14} \vee \bar{x}_{15} .
\end{array}
\]
The clauses are numbered $1$ to $20$ down the columns from left to right, the
first display before the second. Every literal occurs exactly twice, and $f$ is
unsatisfiable, least among such
formulas at twenty clauses~\cite{ZhangPeitlSzeider2024}. The incidence graph $G$
carries, for each variable $x_i$, two vertices $v_i$ and $\bar{v}_i$ joined by an
$M$-edge, and for each clause a vertex $w_j$ joined to $v_i$ at a positive literal
$x_i$ and to $\bar{v}_i$ at a negative literal $\bar{x}_i$. With $M = \{v_i
\bar{v}_i : 1 \le i \le 15\}$ this is the cubic connected graph on fifty vertices
of Theorem~\ref{thm:cex}.

A dominating set of order fourteen is
\[
\{v_1, \bar{v}_1, \bar{v}_2, v_3, \bar{v}_3, \bar{v}_4, v_5, v_6\} \cup
\{w_9, w_{10}, w_{14}, w_{15}, w_{19}, w_{20}\},
\]
the eight polarity vertices and six clause vertices, so $\gamma(G) \le 14$. This
set is not a transversal of $M$, meeting two of its fifteen edges in both
endpoints, four in one, and nine in neither. Exhaustive search finds no dominating
set of order thirteen and exactly one maximal matching of order fifteen, so
$\gamma(G) = 14$ and $M$ is the unique minimum maximal matching. Since $\Phi(G, M)
= f$ is unsatisfiable, no transversal of $M$ dominates $G$, and the domination
number is reached only off the matching, by the fourteen vertices above.

\bibliographystyle{alpha}
\bibliography{references}

\end{document}